\documentclass{amsart}
\pdfoutput=1 
\usepackage{amsmath,amssymb,verbatim}

\usepackage[colorlinks]{hyperref}

\usepackage{times}

\usepackage[dvips]{graphicx}
\usepackage{amsfonts}
\usepackage{amsthm}
\usepackage{epsfig}
\usepackage{color}

\usepackage{url}

%
\newdimen\hoogte    \hoogte=12pt    
\newdimen\breedte   \breedte=14pt   
\newdimen\dikte  \dikte=0.5pt    
\def\beginYoung{
 \begingroup
 \def\vr{\vrule height0.8\hoogte width\dikte depth 0.2\hoogte}
 \def\fbox##1{\vbox{\offinterlineskip
    \hrule height\dikte
    \hbox to \breedte{\vr\hfill##1\hfill\vr}
    \hrule height\dikte}}
 \vbox\bgroup \offinterlineskip \tabskip=-\dikte \lineskip=-\dikte
  \halign\bgroup &\fbox{##\unskip}\unskip  \crcr }
\def\End@Young{\egroup\egroup\endgroup}

\newtheorem{theorem}{Theorem}[section]
\newtheorem{proposition}[theorem]{Proposition}
\newtheorem{lemma}[theorem]{Lemma}
\newtheorem{corollary}[theorem]{Corollary}

\newtheorem*{Murtheorem}{Murnaghan Theorem}
\newtheorem*{LRtheorem}{Murnaghan--Littlewood Theorem}
\newtheorem*{Britheorem}{Monotonicity  Theorem}

\theoremstyle{definition}
\newtheorem*{defi}{Definition}
\newtheorem{exa}{Example}

\renewcommand{\S}{\mathfrak{S}}

\newcommand{\stab}{\operatorname{stab}}
\newcommand{\width}{\operatorname{width}}
\newcommand{\RKron}{\operatorname{RKron}}
\newcommand{\maxsupp}[1]{\max \left\{ #1 \;|\; \text{\rm $\gamma$ partition s.t. $\overline{g}_{\alpha,\beta}^{\gamma}>0$} \right\}}
\newcommand{\minsupp}[1]{\min \left\{ #1 \;|\; \text{\rm $\gamma$ partition s.t. $\overline{g}_{\alpha,\beta}^{\gamma}>0$} \right\}}

\newcommand{\scalar}[2]{\langle #1 \,|\, #2 \rangle}
\newcommand{\Supp}{\textrm{Supp}}
\newcommand{\cut}[1]{\overline{#1}}

\title{The stability of the Kronecker product of Schur functions}
\date{\today}
\author{Emmanuel Briand}
\address{
Emmanuel Briand and Mercedes Rosas,
Departamento de \'Algebra,
Facultad de Matem\'aticas,
Aptdo. de Correos 1160,
41080 Sevilla, Spain.}
\email{ebriand@us.es, mrosas@us.es}

\author{Rosa Orellana}
\address{Rosa Orellana, Dartmouth College, Mathematics Department, 6188 Kemeny Hall, Hanover, NH 03755, USA.}
\email{rosa.c.orellana@dartmouth.edu}

\author{Mercedes Rosas}

\thanks{Mercedes Rosas is supported by a Ram\'on y Cajal Fellowship (MICINN, Spain). Mercedes Rosas and Emmanuel Briand are also supported by Projects MTM2007--64509 (MICINN, Spain) and FQM333 (Junta de Andalucia).}

\begin{document}

\begin{abstract}

In the late 1930's Murnaghan discovered the existence of a
stabilization phenomenon for the Kronecker product of  Schur
functions. For $n$ sufficiently large, the values of the Kronecker
coefficients appearing in the product of two Schur functions of degree
$n$ do not depend on the first part of the indexing partitions, but only
on the values of their remaining parts.    We compute the exact value
of $n$ for which all the coefficients of a Kronecker product of Schur
functions stabilize.  We also compute two new bounds for the 
stabilization of a  sequence of coefficients and show that they 
improve existing  bounds of M. Brion and E. Vallejo.  
\end{abstract}


\maketitle

\section*{Introduction}

The understanding of the \emph{Kronecker coefficients of the symmetric group} (the multiplicities appearing when the tensor product of two irreducible representations of the symmetric group is decomposed into irreducibles; equivalently, the structural constants for the Kronecker product of symmetric functions in the Schur basis)
is a longstanding open  problem. Richard Stanley writes ``One of the main problems in the combinatorial  representation theory of the symmetric group is to obtain a combinatorial  interpretation for the Kronecker coefficients'' \cite{Stanley:vol2}. It is also a source of new challenges such as the problem of describing  the set of non--zero Kronecker coefficients \cite{Oeding}, a problem inherited from quantum information theory  \cite{Klyachko,Christandl:Harrow:Mitchison}. Or  proving that the  positivity of a Kronecker coefficient can be decided in polynomial  time, a problem posed by Mulmuley at the heart of his Geometric  Complexity Theory \cite{GCT6}.

The present work  is part of a series of articles that study another family of nonnegative constants, the \emph{reduced Kronecker coefficients} $\overline{g}_{\mu,\nu}^{\lambda}$, as a way to gain understanding about the Kronecker coefficients ${g}_{\mu,\nu}^{\lambda}$, \cite{Briand:Orellana:Rosas:SH,Briand:Orellana:Rosas:Chamber}. 
In \cite{Briand:Orellana:Rosas:Chamber}, we obtained the first explicit
piecewise quasipolynomial  description of a non--trivial family of Kronecker
coefficients, the Kronecker  coefficients indexed by two two--row
shapes.  This new description allowed us to test several conjectures
of Mulmuley. As  a result, we found a  counterexample
\cite{Briand:Orellana:Rosas:SH}  for the strong version of his SH conjecture \cite{GCT6} on the behavior of the Kronecker coefficients  under stretching of its indices.

The starting point of the investigation presented in this paper is a remarkable stability property for the Kronecker products of Schur functions discovered by Murnaghan \cite{Murnaghan:1938, Murnaghan:1955}. This property is best shown on an example, that will be followed by a precise statement. Denote the Kronecker product of $s_{\lambda}$ and $s_{\beta}$ by $s_{\lambda} \ast s_{\beta}$. Then, 
 {\begin{align*}
s_{2,2}\ast s_{2,2}&= 
s_{4}+
s_{1, 1, 1, 1}+
 \phantom{2} s_{2, 2}
 \\
s_{3,2}\ast s_{3,2}&= 
s_{5}+
s_{2, 1, 1, 1}
+\phantom{2}s_{3, 2}+
s_{4, 1}+s_{3, 1, 1}
+\phantom{2}s_{2, 2, 1} \\
s_{4,2}\ast s_{4,2} &= 
s_6
+s_{3, 1, 1, 1}
+2s_{4, 2}
+s_{5, 1}
+s_{4, 1, 1}
+2s_{3, 2, 1}+s_{2, 2, 2} \\
s_{5,2}\ast s_{5,2} &= 
s_{7}+
s_{4, 1, 1, 1}+
2s_{5, 2}+
s_{6, 1}+s_{5, 1, 1}
+2s_{4, 2, 1}
+s_{3, 2, 2}
+s_{4, 3}
+s_{3, 3, 1} \\
s_{6,2}\ast s_{6,2} &= 
s_8
+s_{5, 1, 1, 1}+
2s_{6, 2}
+s_{7, 1}+
s_{6, 1, 1}
+2s_{5, 2, 1}
+s_{4, 2, 2}
+s_{5, 3}
+s_{4, 3, 1} 
+s_{4, 4}\\
s_{7,2}\ast s_{7,2} &= 
s_{9}
+s_{6, 1, 1, 1}
+2s_{7, 2}
+s_{8, 1}
+s_{7, 1, 1}
+2s_{6, 2, 1}
+s_{5, 2, 2} 
+s_{6, 3}
+s_{5, 3, 1}
+s_{5, 4}\\
s_{\bullet,2}\ast s_{\bullet,2} &= 
s_{\bullet}
+s_{\bullet, 1, 1, 1}
+2s_{\bullet, 2}
+s_{\bullet, 1}+s_{\bullet, 1, 1}
+2s_{\bullet, 2, 1}
+s_{\bullet, 2, 2}
+s_{\bullet, 3}
+s_{\bullet, 3, 1}
+s_{\bullet, 4} 
\end{align*}
}
Given a partition $\alpha=(\alpha_1,\ldots,\alpha_k)$ 
and an integer $n$, we set $\alpha[n]=(n-|\alpha|, \alpha_1, \ldots,
\alpha_k)$. Murnaghan's theorem says that for $n$ big enough the expansions of $s_{\alpha[n]} \ast s_{\beta[n]}$ in the Schur
basis all coincide, except for the first part of the indexing partitions which is determined by the degree, $n$.

In particular, given any three partitions $\alpha$, $\beta$ and $\gamma$, the sequence with general term ${g}_{\alpha[n]\beta[n]}^{\gamma[n]}$ is eventually constant. The reduced Kronecker coefficient $\overline{g}_{\alpha,\beta}^{\gamma}$ is defined as the stable value of this sequence. In our example, we see that 
 $\overline{g}_{(2),(2)}^{(2)}=2$ and  $\overline{g}_{(2),(2)}^{(4)}=1$.

In view of the difficulty of studying the Kronecker coefficients,  it
is surprising to obtain theorems that hold in general.   Regardless of
this,   we present new results of general nature. We find an elegant formula that tells the point $n=\stab(\alpha,\beta)$ at which the expansion of the Kronecker product $s_{\alpha[n]} \ast s_{\beta[n]}$ stabilizes:
\[
\stab(\alpha,\beta)=|\alpha|+|\beta|+\alpha_1+\beta_1
\]
We also find new upper bounds for the point at which the sequence ${g}_{\alpha[n]\beta[n]}^{\gamma[n]}$ becomes constant, improving previously known bounds due to Brion \cite{Brion:Foulkes} and Vallejo \cite{Vallejo}. 
Interestingly, our investigations reduce to maximizing or bounding linear forms on the sets $\Supp(\alpha,\beta)$ of partitions $\gamma$ such that $\overline{g}_{\alpha,\beta}^{\gamma} >0$, where $\alpha$ and $\beta$ are fixed partitions. This connects our research to a current problem of major importance: 
 to describe the cones generated by the indices of the nonzero Kronecker coefficients \cite{Klyachko,Oeding}. Moreover, using Weyl's inequalities for eigenvalues of triples of hermitian matrices \cite{Weyl}, we find the maximum of $\gamma_1$ and upper bounds for all parts $\gamma_k$, among all $\gamma$ in $\Supp(\alpha,\beta)$.

  This paper is organized as follows, in Section \ref{sec:main results} we give a detailed
  description of the main results of this work.  In Section \ref{sec:RKC},  we prove the theorem that 
  allows us to recover the Kronecker coefficients from the reduced Kronecker coefficients. 
  We also give an expression of the reduced Kronecker coefficients in terms of Littlewood-Richardson    
  coefficients and Kronecker coefficients.  The main significance of this expression is that it doesn't   
  involve cancellations and it provides us with a tool to prove most of our main results.   In Section \ref{sec:product},
  we provide a proof for the sharp bound for the stability of the Kronecker product.  In the next 
  section, Section \ref{sec:bounds}, we consider the problem of finding bounds on the rows of $\gamma$, whenever $\overline{g}_{\alpha, \beta}^\gamma>0$.  We prove a theorem for a general upper bound for all rows of $\gamma$ using this theorem we give a sharp bound for $\gamma_1$.   In Section \ref{sec:coeff},  we describe a general
  technique for deriving upper bounds for the stabilization of sequences of coefficients.  Using this technique we  get two new bounds.  We show that one of these bounds improves
  the  bounds of Brion and  Vallejo. Finally,  we  compare our results 
  to existing results in the literature.

  


\section{Preliminaries and Main Results}\label{sec:main results}

Let $\lambda$ be a partition (weakly decreasing sequences of positive integers) of  $n$. Denote by  $V_{\lambda}$  the irreducible  representation of the symmetric group $\S_n$  indexed by  $\lambda$. 
 The Kronecker coefficient $g_{\mu,\nu}^{\lambda}$ is  the multiplicity of $V_{\lambda}$ in the decomposition into  irreducible representations of the tensor product $V_{\mu} \otimes V_{\nu}$. 
  The  Frobenius map identifies the irreducible representations $V_\lambda$ of the symmetric group with the Schur function $s_{\lambda}$. In doing so, it allows us to lift the tensor product of representations of the symmetric group to the setting of symmetric functions.  Accordingly, the Kronecker coefficients ${g}_{\mu,\nu}^{\lambda}$ define the Kronecker product on symmetric functions by setting
\[
s_{\mu}*s_{\nu} = \sum_{\lambda} g_{\mu,\nu}^{\lambda} s_{\lambda}.
\]
The reader is referred  to  \cite{Macdonald} Chapter I or \cite{Stanley:vol2} Chapter 7 for the standard facts in the theory of symmetric functions.

Throughout this paper we follow the standard notation for partitions
found in \cite{Macdonald}.  
If $\lambda =(\lambda_1, \lambda_2, \ldots,\lambda_k)$  is a partition, 
its \emph{parts} are its 
 terms $\lambda_i$. The \emph{weight} of $\lambda$ is defined to be the sum of its parts, and it is denoted by $|\lambda|$.
The number $k$ of (nonzero) parts of $\lambda$ is called its \emph{length} , and denoted by $\ell(\lambda).$

We identify a partition $\lambda$ with its  Ferrers
 diagram
 \[
D(\lambda) = \left\{ (i,j) : 1 \le i \le \lambda_j, 1 \leq j \leq \ell(\lambda) \right\} \subseteq \mathbb{N}^2
 \]
This way, we obtain that
$\alpha\, \cap\,  \beta\, $=$( \min(\alpha_1,\beta_1),$
$\min(\alpha_2,\beta_2), \ldots)$.
The sum of two partitions 
$\alpha + \beta$ is defined  as $ ( \alpha_1+\beta_1, \alpha_2+\beta_2,
\ldots)$.

Listing the number of points in each column of $D(\lambda)$ gives
 the  transpose partition of $\lambda$, denoted by
$\lambda'$; equivalently,
one obtains the Ferrers diagram of $\lambda'$  by reflecting the one of
$\lambda$ along its main diagonal.

The skew shape
$\mu / \nu$ is defined as the set difference $D(\mu)\setminus D(\nu)$.
Notice that  
$D(\mu)\subset D(\lambda)$ if $\mu_i\leq
\lambda_i$ for all $i$. 
Again, the  intersection and union of skew-shapes is defined as
the corresponding operations on their diagrams.
The  \emph{width} of $\mu / \nu $ is defined as the number of nonzero columns 
of  $\mu / \nu$  in $\mathbb{N}^2.$

Consider a partition $\lambda$ and an integer $n$. Then $\bar{\lambda}$ is defined to be the partition $(\lambda_2,\lambda_3, \ldots)$ and 
$\lambda[n]$ as the sequence $(n-|\lambda|, \lambda_1, \lambda_2, \ldots)$. 
Notice that $\lambda[n]$ is  a partition only if $n-|\lambda|\geq
\lambda_1$.


We are ready to describe the starting point of our investigations, a  remarkable theorem of  Murnaghan that deserves to be
better known.  We first need to extend the definition of 
$s_{\mu}$ to the case where  $\mu$ is any finite sequence of $n$ integers. For this, we use the  Jacobi-Trudi determinant,
\begin{equation}\label{eq:JT}
s_{\mu}=\det\left(  h_{\mu_j+i-j} \right)_{1\le i,j \le n},
\end{equation}
where $h_k$ is the complete homogeneous symmetric function of degree $k$. In particular, $h_k=0$ if $k$ is negative, and $h_0=1$. It is not hard to see that such a Jacobi--Trudi determinant $s_{\mu}$ is either zero or $\pm 1$ times a Schur function.

\begin{Murtheorem}[Murnaghan, \cite{Murnaghan:1938, Murnaghan:1955}] 
\label{thm:murnaghan}
There exists a family of non-negative integers $(\overline{g}_{\alpha\beta}^\gamma)$ indexed by triples of partitions $(\alpha,\beta,\gamma)$ such that, for  $\alpha$ and $\beta$ fixed, only finitely many terms $\overline{g}_{\alpha\beta}^\gamma$ are nonzero, and for all $n\geq 0$, 
\begin{equation}\label{eq:thm Mur}
s_{\alpha[n]}\ast s_{\beta[n]} =\sum_{\gamma} \overline{g}_{\alpha\beta}^\gamma s_{\gamma[n]}
\end{equation}
Moreover, the coefficient $\overline{g}_{\alpha\beta}^\gamma$ vanishes unless the weights of the three partitions fulfill   the inequalities:  
  \begin{align*} \label{muriq}
  |\alpha|\leq |\beta|+|\gamma|, \qquad |\beta|\leq |\alpha|+|\gamma|,\qquad |\gamma|\leq |\alpha|+|\beta|.
  \end{align*}
  \end{Murtheorem}
In what follows, we refer to these inequalities as \emph{Murnaghan's  inequalities} and we will denote $\Supp(\alpha,\beta)$ the set of all partitions $\gamma$ such that $\overline{g}_{\alpha,\beta}^{\gamma}>0$. We follow 
 Klyachko \cite{Klyachko} and call the coefficients $\overline{g}_{\alpha\beta}^\gamma$ the {\em reduced Kronecker coefficients}.
An elegant  proof of Murnaghan's Theorem, using vertex operators on symmetric functions, is given in \cite{Thibon}.
  
\begin{exa}
According to Murnaghan's theorem the reduced Kronecker coefficients
determine the Kronecker product of two Schur functions, even for small
values of $n$. 
For instance,
\[
s_{2,2}\ast s_{2,2} = 
s_{4}
+s_{1, 1, 1, 1}
+2s_{2, 2}
+s_{3, 1}+s_{2, 1, 1}
+2s_{1, 2, 1}
+s_{0, 2, 2}
+s_{1, 3}
+s_{0, 3, 1}
+s_{0, 4} 
\]
The Jacobi-Trudi determinants corresponding to $s_{1, 2, 1}$ and
$s_{0, 2, 2}$ have a repeated column, hence they are zero. On the other hand, it
is easy to see that $s_{1, 3}=-s_{2,2}$, 
$s_{0, 3, 1}=-s_{2,1,1}$, and $s_{0, 4}=-s_{3,1}$. After taking into
account the resulting cancellations, we recover the
expression of the Kronecker product $s_{2,2}\ast s_{2,2}$ in the Schur basis 
$
s_{4}+s_{1, 1, 1, 1}+s_{2, 2}.
$
\end{exa}

The reduced Kronecker coefficients contain the  Little\-wood--Richardson coefficients as special cases. 
  \begin{LRtheorem}[Murnaghan \cite{Murnaghan:1955}, Littlewood \cite{Littlewood:1958}] Let $\alpha$, $\beta$ and $\gamma$ be partitions.
If $|\gamma|=|\alpha|+|\beta|$, then  the reduced Kronecker coefficient $\overline{g}_{\alpha,\beta}^{\gamma}$ is equal to the Littlewood--Richardson coefficient $c_{\alpha,\beta}^{\gamma}$.
\end{LRtheorem}

Finally, a remarkable result of Christandl, Harrow, and Mitchison
(originally stated for the Kronecker coefficients) says that  the set
\[
\RKron_k= \{(\alpha, \beta, \gamma)\, |\,
\ell(\alpha),\ell(\beta),\ell(\gamma) \le k \text{ and }
\overline{g}_{\alpha,\beta}^\gamma>0\}
\]
is a
finitely generated semigroup under componentwise addition,  \cite{Christandl:Harrow:Mitchison}. That is, if
$\overline{g}_{\alpha, \beta}^{\gamma} \neq 0$ and $\overline{g}_{\hat\alpha
  \hat\beta }^{\hat\gamma} \neq 0$, then
$\overline{g}_{\alpha+\hat\alpha, \beta+\hat\beta}^{\gamma+\hat\gamma}
\neq 0$. This implies that $\RKron_k$ is closed under
stretching. That is, that $\overline{g}_{\alpha, \beta}^{\gamma} \neq
0$ implies that $\overline{g}_{N\, \alpha, N\,\beta}^{N\,\gamma} \neq
0$ for all $N>0$.

 Both Klyachko and Kirillov have conjectured that the converse also
 holds. That is to say, that the reduced Kronecker coefficients satisfy the
 saturation property, \cite{Klyachko, Kirillov:saturation}. 
Remarkably, the Kronecker coefficients do not satisfy the saturation
 property. For example,
 \[
g^{(n,n)}_{(n,n),(n,n)}=0 \text{ if  $n$ is odd, but
  $g^{(n,n)}_{(n,n),(n,n)}=1$  if $n$  is even.}
\]
At this point, we hope that the reader is convinced that the reduced Kronecker
coefficients are  interesting objects on their own.

We are ready to describe the results of this article.
In Theorem \ref{theorem:g in gbar} we give an explicit formula for
recovering 
the value of the Kronecker coefficients  from the reduced Kronecker
coefficients.
Let $u=(u_1,u_2,\ldots)$ be an infinite sequence and $i$ a positive
integer. Define $u^{\dagger i}$ as the sequence obtained from $u$ by
adding $1$ to its $i-1$ first terms and erasing its $i$--th term:
\[
u^{\dagger i}=(1+u_1,1+u_2, \ldots, 1+u_{i-1}+1,u_{i+1}, u_{i+2}, \ldots)
\]
Partitions are identified with infinite sequences by appending
trailing zeros. Under this identification, when $\lambda$ is a
partition then so is $\lambda^{\dagger i}$ for all positive $i$.
\begin{theorem}[Computing the Kronecker coefficients from the reduced
    Kronecker coefficients]\label{theorem:g in gbar}
Let $n$ be a nonnegative integer and $\lambda$, $\mu$, and $\nu$ be
partitions of $n$.  Then
\begin{equation}\label{eq:g to gbar}
g_{\mu\nu}^\lambda
=
\sum_{i=1}^{\ell(\mu)\ell(\nu)} (-1)^{i+1} \bar{g}_{\bar{\mu}
\bar{\nu}}^{\lambda^{\dagger i}}
\end{equation}
\end{theorem}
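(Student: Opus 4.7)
The plan is to start from Murnaghan's Theorem applied with $\alpha = \overline{\mu}$ and $\beta = \overline{\nu}$, which yields
\[
s_\mu \ast s_\nu \;=\; \sum_\gamma \overline{g}_{\,\overline{\mu},\overline{\nu}}^{\,\gamma}\, s_{\gamma[n]}.
\]
Each symbol $s_{\gamma[n]}$ on the right is interpreted through the Jacobi--Trudi determinant \eqref{eq:JT}, hence is either zero or a signed Schur function. The theorem will then follow by extracting $[s_\lambda]\bigl(s_\mu\ast s_\nu\bigr)$ explicitly: for each partition $\gamma$ in the support of $\overline{g}_{\,\overline{\mu},\overline{\nu}}^{\,\cdot}$, I must decide whether the straightening of $s_{\gamma[n]}$ produces $\pm s_\lambda$, and with which sign.

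The core step is a direct straightening computation. Setting $k = \ell(\gamma) + 1$, the shifted sequence $\gamma[n] + \rho$ (with $\rho = (k-1,k-2,\ldots,0)$) has first entry $n - |\gamma| + k - 1$ and remaining entries $\gamma_{j-1} + k - j$ for $j = 2, \ldots, k$. The latter tuple is strictly decreasing since $\gamma$ is a partition, so the Jacobi--Trudi determinant is nonzero precisely when the first entry fits uniquely into some position $i$ of this decreasing list. Matching term by term against $\lambda + \rho$, I find that $s_{\gamma[n]} = \pm s_\lambda$ exactly when $\gamma_{j-1} = \lambda_{j-1} + 1$ for $j < i$ and $\gamma_{j-1} = \lambda_j$ for $j > i$, i.e.\ when $\gamma = \lambda^{\dagger i}$. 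The permutation that sorts $\gamma[n] + \rho$ into decreasing order is the single cycle $(1\,2\,\cdots\,i)$, whose sign is $(-1)^{i-1}=(-1)^{i+1}$, giving
\[
s_{\lambda^{\dagger i}[n]} \;=\; (-1)^{i+1}\, s_\lambda.
\]
Distinct values of $i$ produce distinct partitions $\lambda^{\dagger i}$, so no further cancellations arise.

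To truncate the sum at $i = \ell(\mu)\ell(\nu)$, I combine the elementary observation $\ell(\lambda^{\dagger i}) \ge i - 1$ (the first $i-1$ entries of $\lambda^{\dagger i}$ are each at least $1$) with the classical length bound $\ell(\tau) \le \ell(\sigma)\ell(\pi)$ for any nonzero ordinary Kronecker coefficient $g_{\sigma,\pi}^{\tau}$. Applied, for $n$ large, with $\sigma = \overline{\mu}[n]$, $\pi = \overline{\nu}[n]$, $\tau = \gamma[n]$, this yields $\ell(\gamma) \le \ell(\mu)\ell(\nu) - 1$ for every $\gamma$ in the support, so only indices $i \le \ell(\mu)\ell(\nu)$ can contribute; collecting coefficients then gives the claimed formula. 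The main obstacle is the bookkeeping inside the straightening computation: identifying the sorting permutation as a single cycle of length $i$ and verifying that the resulting sign is exactly $(-1)^{i+1}$, rather than being spread across several transpositions whose signs would have to be tracked separately.
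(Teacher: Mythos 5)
Your proposal is correct and follows essentially the same route as the paper: apply Murnaghan's theorem with $\alpha=\bar\mu$, $\beta=\bar\nu$, extract the coefficient of $s_\lambda$ by straightening the Jacobi--Trudi determinant $s_{\gamma[n]}$, observe that a nonzero contribution forces $\gamma=\lambda^{\dagger i}$ with the sorting permutation an $i$-cycle of sign $(-1)^{i+1}$, and truncate the range of $i$ via the length bound $\ell(\gamma)\le\ell(\mu)\ell(\nu)-1$ on the support. Your explicit justification of the truncation and of the injectivity of $i\mapsto\lambda^{\dagger i}$ is slightly more careful than the paper's, but the argument is the same.
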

This Theorem was  stated in  \cite{Briand:Orellana:Rosas:FPSAC}, and used  to compute  an explicit piecewise quasipolynomial description  for the Kronecker  coefficients indexed by two two--row shapes.
 

Murnaghan Theorem implies the stability property for the Kronecker products $s_{\alpha[n]} \ast s_{\beta[n]}$ presented in the introduction. Indeed, for $n$ big enough,
all sequences $\gamma[n]$ for $\gamma \in \Supp(\alpha,\beta)$ are partitions, and then \eqref{eq:thm Mur} is the expansion of $s_{\alpha[n]} \ast s_{\beta[n]}$ in the Schur basis. In particular, for $n$ big enough, the Kronecker coefficient $g_{\alpha[n],\beta[n]}^{\gamma[n]}$ is equal to the reduced Kronecker coefficient $\overline{g}_{\alpha,\beta}^{\gamma}$.

It is natural to ask about the index $n$ at which the expansion of $s_{\alpha[n]} \ast s_{\beta[n]}$ stabilizes. This index is defined as follows. 
\begin{defi}[$\stab(\alpha,\beta)$]
Let $V$ be the linear operator on symmetric functions defined on the Schur basis by: $V \left(s_{\lambda}\right)=s_{\lambda+(1)}$ for all partitions $\lambda$. Let $\alpha$ and $\beta$ be partitions. Then $\stab(\alpha,\beta)$ is defined as the smallest integer $n$ such that $s_{\alpha[n+k]} \ast s_{\beta[n+k]}=V^k \left(s_{\alpha[n]} \ast s_{\beta[n]}\right)$ for all $k>0$.
\end{defi}

As an illustration see the example in the introduction, there  $\alpha=\beta=(2)$
and the Kronecker
product is stable starting at $s_{(6,2)}\ast s_{(6,2)}$. Since $(6,2)$ is a
partition of $8$, we get that $\stab(\alpha,\beta)=8$. 

\begin{theorem}  \label{thm:global}
Let $\alpha$ and $\beta$ be two partitions. Then 
\[
\stab(\alpha,\beta) =|\alpha|+|\beta|+\alpha_1+\beta_1.
\]
\end{theorem}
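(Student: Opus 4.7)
The plan is to reduce $\stab(\alpha,\beta)$ to a combinatorial maximum over the support of the reduced Kronecker coefficients, and then to evaluate that maximum. Set $M := \max\{|\gamma|+\gamma_1 : \gamma\in\Supp(\alpha,\beta)\}$; I will show that $\stab(\alpha,\beta)=M$ and $M=|\alpha|+|\beta|+\alpha_1+\beta_1$.

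\textbf{Reduction to $M$.} For $n\ge M$ every $\gamma\in\Supp(\alpha,\beta)$ satisfies $n-|\gamma|\ge\gamma_1$, so each sequence $\gamma[n]$ appearing in Murnaghan's identity \eqref{eq:thm Mur} is a genuine partition. The identity is therefore the Schur expansion of $s_{\alpha[n]}\ast s_{\beta[n]}$, and since $\gamma[n+1]=\gamma[n]+(1)$, applying $V$ reproduces the expansion at $n+1$; hence stability at $n$. Conversely, at $n=M-1$ pick $\gamma^*$ achieving the maximum. Then $\gamma^*[M-1]=(\gamma^*_1-1,\gamma^*_1,\gamma^*_2,\ldots)$, and in the Jacobi-Trudi determinant \eqref{eq:JT} its first two columns both equal $(h_{\gamma^*_1-2+i})_i$, so $s_{\gamma^*[M-1]}=0$; the same holds for every $\gamma$ with $|\gamma|+\gamma_1=M$. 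Thus the Schur expansion of $s_{\alpha[M-1]}\ast s_{\beta[M-1]}$ is $\sum_{|\gamma|+\gamma_1<M}\bar g^\gamma_{\alpha\beta}\,s_{\gamma[M-1]}$, and applying $V$ yields a symmetric function differing from $s_{\alpha[M]}\ast s_{\beta[M]}$ by $\sum_{|\gamma|+\gamma_1=M}\bar g^\gamma_{\alpha\beta}\,s_{\gamma[M]}$, a positive combination of distinct Schur functions (distinct because $\gamma[M]$ determines $\gamma$), hence nonzero. Stability fails at $n=M-1$ and $\stab(\alpha,\beta)=M$ follows.

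\textbf{Evaluation of $M$.} The lower bound $M\ge|\alpha|+|\beta|+\alpha_1+\beta_1$ is witnessed by $\gamma=\alpha+\beta$: since $|\gamma|=|\alpha|+|\beta|$, the Murnaghan-Littlewood theorem gives $\bar g^{\alpha+\beta}_{\alpha,\beta}=c^{\alpha+\beta}_{\alpha,\beta}=1$ (the skew shape $(\alpha+\beta)/\alpha$ admits the unique LR tableau whose $i$-th row is filled by $\beta_i$ copies of $i$), and this partition achieves $|\gamma|+\gamma_1=|\alpha|+|\beta|+\alpha_1+\beta_1$. For the matching upper bound, I would invoke the cancellation-free expression of $\bar g^\gamma_{\alpha\beta}$ as a positive sum of products of Littlewood-Richardson and Kronecker coefficients proved in Section~\ref{sec:RKC}. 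In each positive summand, the LR constraints on the indices, combined with the classical first-row bound for LR coefficients and Murnaghan's inequality $|\gamma|\le|\alpha|+|\beta|$, control $|\gamma|$ and $\gamma_1$ simultaneously and yield $|\gamma|+\gamma_1\le|\alpha|+|\beta|+\alpha_1+\beta_1$ for every $\gamma\in\Supp(\alpha,\beta)$.

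\textbf{Main obstacle.} The delicate step is the upper bound on $M$. Murnaghan's inequalities alone control only $|\gamma|$ and permit partitions with small $|\gamma|$ but potentially large first row, so one cannot split the bound $|\gamma|+\gamma_1\le|\alpha|+|\beta|+\alpha_1+\beta_1$ into independent bounds on $|\gamma|$ and $\gamma_1$ at this stage of the paper. The cancellation-free expression of Section~\ref{sec:RKC} is precisely what lets one tame $|\gamma|$ and $\gamma_1$ together; without such an expression one would be forced to invoke the Weyl-type machinery used later in Section~\ref{sec:bounds} to derive row bounds.
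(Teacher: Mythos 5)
Your overall strategy coincides with the paper's: reduce $\stab(\alpha,\beta)$ to $M=\max\{|\gamma|+\gamma_1 : \gamma\in\Supp(\alpha,\beta)\}$ (this is Lemma \ref{lemma:stabbound}) and then evaluate $M$ (Theorem \ref{thm:g and g1}). Your reduction is correct and complete: the stability for $n\geq M$ is the paper's argument verbatim, and your failure-at-$M-1$ argument via vanishing Jacobi--Trudi determinants with repeated columns is a valid variant of the paper's observation that, for $\gamma$ attaining the maximum, $s_{\gamma[M]}$ has equal first and second parts and hence does not lie in the image of $V$. Your lower bound for $M$, via $\gamma=\alpha+\beta$ and the Murnaghan--Littlewood theorem, is also exactly the paper's.

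The gap is the upper bound $|\gamma|+\gamma_1\leq|\alpha|+|\beta|+\alpha_1+\beta_1$ for $\gamma\in\Supp(\alpha,\beta)$, which you rightly single out as the delicate step and rightly attribute to the cancellation-free expression of Lemma \ref{prop:Littlewood}, but which you assert rather than derive: saying the LR constraints ``control $|\gamma|$ and $\gamma_1$ simultaneously'' is the conclusion, not an argument. The missing chain is: from $c_{\zeta,\rho,\sigma}^{\gamma}>0$ one gets $\gamma_1\leq\zeta_1+\rho_1+\sigma_1$ and $|\gamma|=|\zeta|+|\rho|+|\sigma|$; the key move is then to bound $\zeta_1\leq|\zeta|$ and replace $2|\zeta|$ by $|\delta|+|\epsilon|$ (legitimate because $g_{\delta,\epsilon}^{\zeta}>0$ forces $|\delta|=|\epsilon|=|\zeta|$), giving $|\gamma|+\gamma_1\leq(|\delta|+|\sigma|+\sigma_1)+(|\epsilon|+|\rho|+\rho_1)$; finally $c_{\delta,\sigma,\tau}^{\alpha}>0$ forces $\sigma\subset\alpha$ and $|\delta|+|\sigma|\leq|\alpha|$, hence $|\delta|+|\sigma|+\sigma_1\leq|\alpha|+\alpha_1$, and symmetrically for $\beta$. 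Note that Murnaghan's inequality $|\gamma|\leq|\alpha|+|\beta|$, which you list among your ingredients, plays no role here (it is subsumed by the identity $|\gamma|=|\zeta|+|\rho|+|\sigma|$); what actually makes the bound close is the step $\zeta_1\leq|\zeta|$ together with the splitting of $2|\zeta|$ into $|\delta|+|\epsilon|$, which is what allows the two halves of the estimate to be charged separately to $\alpha$ and to $\beta$. As written, your sketch does not exhibit this mechanism, so the decisive inequality remains unproved.
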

In order to show that this theorem holds, we first reduce the calculation of $\stab(\alpha,\beta)$ to maximizing a linear form on $\Supp(\alpha,\beta)$ (Lemma \ref{lemma:stabbound}):
\[
\stab(\alpha,\beta) =\maxsupp{|\gamma|+\gamma_1}.
\]
Then,  we  show that (Theorem \ref{thm:g and g1})
\begin{equation}\label{eq:g and g1}
\maxsupp{|\gamma|+\gamma_1}
=|\alpha|+|\beta|+\alpha_1+\beta_1
\end{equation}
using a decomposition of $\overline{g}_{\alpha\beta}^\gamma$ as a sum of nonnegative summands
derived from Murnaghan's theorem, this decomposition is described in Lemma \ref{prop:Littlewood}.

We also obtain other interesting  bounds for linear forms over the set $\Supp(\alpha,\beta)$:
\begin{itemize}
\item In Theorem \ref{thm:gamma1} we show that:
\begin{equation}\label{eq:max gamma1}
\maxsupp{\gamma_1}
=
|\alpha \cap \beta|+\max(\alpha_1,\beta_1)
\end{equation}
\item More generally we obtain in Theorem \ref{prop:otherparts} that, 
whenever $\overline{g}_{\alpha,\beta}^{\gamma}>0$, we have for all positive integers $i$, $j$:
\[
\gamma_{i+j-1} \leq |E_i\alpha \cap E_j\beta|+\alpha_i+\beta_j
\]
where $E_k \lambda$ stands for the partition obtained from $\lambda$ by erasing its $k$--th part.
\item We also obtain (Theorem \ref{minmaxgamma}):
\begin{align*}
\maxsupp{|\gamma|}&= |\alpha|+|\beta|,\\
\minsupp{|\gamma|}&= \max(|\alpha|,|\beta|)-|\alpha\cap\beta|.
\end{align*}
\end{itemize}
Note that Formula \eqref{eq:max gamma1} is reminiscent 
to the following result for the Kronecker coefficients:
\begin{proposition}[ Klemm \cite{Klemm}, Dvir \cite{Dvir} Theorem 1.6, Clausen and Meier  \cite{Clausen:Meier} Satz 1.1.]\label{prop:Dvir}
Let $\alpha$ and $\beta$ be partitions with the same weight. Then:
\[
\max\left\{\gamma_1\;|\; \text{\rm $\gamma$ partition s. t. $g_{\alpha,\beta}^{\gamma}>0$} \right\}=|\alpha \cap \beta|
\]
\end{proposition}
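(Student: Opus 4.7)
The plan is to establish the upper bound $\gamma_1 \leq |\alpha \cap \beta|$ and the matching lower bound (achievability) separately. For the upper bound, I would set $k := \gamma_1$ and study the restriction of $V_\gamma$ to the Young subgroup $\S_k \times \S_{n-k}$. Because $\gamma_1$ is the largest part of $\gamma$, Pieri's rule produces $V_{(k)} \boxtimes V_{\bar\gamma}$ as a subrepresentation of $\mathrm{Res}^{\S_n}_{\S_k \times \S_{n-k}} V_\gamma$. The hypothesis $g^\gamma_{\alpha,\beta} > 0$ gives an embedding $V_\gamma \hookrightarrow V_\alpha \otimes V_\beta$, so by restriction
\[
V_{(k)} \boxtimes V_{\bar\gamma} \;\hookrightarrow\; \mathrm{Res}\, V_\alpha \;\otimes\; \mathrm{Res}\, V_\beta.
\]
Using the branching rule $\mathrm{Res}^{\S_n}_{\S_k \times \S_{n-k}} V_\alpha = \bigoplus_{\sigma \vdash k,\,\tau \vdash n-k} c^{\alpha}_{\sigma,\tau}\, V_\sigma \boxtimes V_\tau$, together with the fact that $V_{(k)}$ occurs in $V_\sigma \otimes V_{\sigma'}$ only when $\sigma = \sigma'$ (every $\S_n$-irreducible is self-dual), one finds some $\sigma \vdash k$ satisfying $c^{\alpha}_{\sigma,\tau}>0$ and $c^{\beta}_{\sigma,\tau'}>0$ for suitable $\tau, \tau'$. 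Both Littlewood--Richardson coefficients force $\sigma \subseteq \alpha$ and $\sigma \subseteq \beta$, hence $\sigma \subseteq \alpha \cap \beta$ and $k = |\sigma| \leq |\alpha \cap \beta|$.

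For the achievability, I would run the same argument in reverse: take $\sigma := \alpha \cap \beta$ and pick partitions $\tau, \tau' \vdash n - |\sigma|$ with $c^{\alpha}_{\sigma,\tau} > 0$ and $c^{\beta}_{\sigma,\tau'} > 0$ (any Littlewood--Richardson filling of the skew shapes $\alpha/\sigma$ and $\beta/\sigma$ yields such a content). Then choose any irreducible $V_{\bar\gamma}$ appearing in the $\S_{n-k}$-module $V_\tau \otimes V_{\tau'}$. Frobenius reciprocity promotes the resulting $V_{(k)} \boxtimes V_{\bar\gamma}$ inside $\mathrm{Res}\, V_\alpha \otimes \mathrm{Res}\, V_\beta$ to an embedding $V_\gamma \hookrightarrow V_\alpha \otimes V_\beta$ with $\gamma = (k, \bar\gamma)$, as needed.

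The main obstacle is ensuring that $(k, \bar\gamma)$ is an honest partition, i.e., $\bar\gamma_1 \leq k = |\alpha \cap \beta|$. I would handle this by induction on $n$: applying the proposition to the strictly smaller Kronecker coefficient $g^{\bar\gamma}_{\tau, \tau'}$ gives $\bar\gamma_1 \leq |\tau \cap \tau'|$, so it suffices to select $\tau, \tau'$ with $|\tau \cap \tau'| \leq k$. Such a choice should be available because $\alpha/\sigma$ and $\beta/\sigma$ live ``outside'' of the common intersection, so their column overlap is naturally controlled by $k$. An alternative route that sidesteps the induction is to derive Dvir's explicit formula for $g^{\gamma}_{\alpha,\beta}$ at the extremal value $\gamma_1 = |\alpha \cap \beta|$ as a nonnegative sum of products of Littlewood--Richardson and smaller Kronecker coefficients, and then exhibit a nonvanishing summand by hand.
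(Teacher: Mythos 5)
The paper itself does not prove this proposition: it is imported verbatim from Klemm, Dvir (Theorem 1.6) and Clausen--Meier (Satz 1.1) and used as a black box (e.g.\ in Theorems \ref{prop:otherparts} and \ref{minmaxgamma}), so there is no internal proof to compare against. Judged on its own, your upper-bound argument is complete and correct, and is essentially the classical one: restrict $V_\gamma\hookrightarrow V_\alpha\otimes V_\beta$ to $\S_{k}\times\S_{n-k}$ with $k=\gamma_1$, extract $V_{(k)}\boxtimes V_{\bar\gamma}$ via Pieri, and use self-duality of $\S_k$-irreducibles to force a common $\sigma\vdash k$ with $c^\alpha_{\sigma,\tau}>0$ and $c^\beta_{\sigma,\tau'}>0$, whence $\sigma\subseteq\alpha\cap\beta$ and $k\le|\alpha\cap\beta|$.

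The gap is in the achievability half, and you have located it yourself without closing it. Frobenius reciprocity applied to $V_{(k)}\boxtimes V_{\bar\gamma}\subseteq\mathrm{Res}\,(V_\alpha\otimes V_\beta)$ does not ``promote'' this to an embedding of $V_{(k,\bar\gamma)}$ specifically; it only gives $\sum_\lambda c^\lambda_{(k),\bar\gamma}\,g^\lambda_{\alpha,\beta}>0$, i.e.\ \emph{some} $\lambda$ with $\lambda/\bar\gamma$ a horizontal $k$-strip and $g^\lambda_{\alpha,\beta}>0$. Your proposed repair --- an induction on $n$ hinging on choosing $\tau,\tau'$ with $|\tau\cap\tau'|\le k$ --- rests on an unproved assertion (``such a choice should be available''), and your alternative route (deriving Dvir's explicit formula) amounts to citing the stronger result you are trying to prove. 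The fix is much cheaper and is already in your hands: any $\lambda$ with $c^\lambda_{(k),\bar\gamma}>0$ automatically satisfies $\lambda_1\ge k$, because a horizontal strip has at most one box per column and $D(\lambda)$ has only $\lambda_1$ columns; combined with the upper bound you just proved, $\lambda_1\le|\alpha\cap\beta|=k$, this forces $\lambda_1=k$, so the maximum is attained by this $\lambda$ whether or not it equals $(k,\bar\gamma)$. With that one observation the induction, and the whole worry about $(k,\bar\gamma)$ being an honest partition, disappear.
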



In Section \ref{sec:coeff}, we consider  the weaker version of the stabilization
problem (Think uniform convergence vs simple convergence).  As
mentioned, Murnaghan's Theorem also implies that each particular
sequence of Kronecker coefficients
${g}_{\alpha[n],\beta[n]}^{\gamma[n]}$ stabilizes with value $\overline{g}_{\alpha,\beta}^{\gamma}$, possibly before
reaching $\stab(\alpha, \beta)$. More is known about these sequences:
     \begin{Britheorem}[Brion  \cite{Brion:Foulkes}, see also \cite{Manivel:rectangularKron}]\label{brion:monotone}
Let $\alpha$, $\beta$ and $\gamma$ be partitions. The sequence  with general term $g_{\alpha[n],\beta[n]}^{\gamma[n]}$ is weakly increasing.   \end{Britheorem}

The second stabilization problem consists in determining the following numbers.
\begin{defi}[$\stab(\alpha,\beta,\gamma)$]
Let $\alpha$, $\beta$, $\gamma$ be partitions. Then $\stab(\alpha,\beta,\gamma)$ is defined as the the smallest integer $N$
such that the sequences $\alpha[N]$, $\beta[N]$ and $\gamma[N]$ are partitions and
$
g_{{\alpha}[n],{\beta}[n]}^{{\gamma}[n]}=\overline{g}_{\alpha,\beta}^{\gamma}
$ for all  $n \geq N$.
\end{defi}

Lemma \ref{lemma:Mf} describes a general technique for producing linear upper bounds for $\stab(\alpha,\beta,\gamma)$ from any linear function $f$ such that $\gamma_1 \le f(\alpha, \beta, \bar \gamma)$ whenever $\overline{g}_{\alpha,\beta}^{\gamma}>0$. 
This method provides two new upper bounds $N_1$ and $N_2$ for
$\stab(\alpha,\beta, \gamma)$. 

The first bound is
found by applying Lemma  \ref{lemma:Mf} to the bound \eqref{eq:max gamma1} for
$\gamma_1$ obtained in Theorem \ref{thm:gamma1}. 
 \begin{theorem}\label{theorem:N1}
Let $M_1(\alpha,\beta;\gamma)=|\gamma|+|\bar{\alpha}\cap \bar{\beta}|+\alpha_1+\beta_1$ 
and 
\[
N_1(\alpha,\beta,\gamma)= \min \left\{
M_1(\alpha,\beta;\gamma), M_1(\alpha,\gamma;\beta), M_1(\beta,\gamma;\alpha) \right\}
\]
Then $\stab(\alpha,\beta,\gamma)\leq N_1(\alpha,\beta,\gamma)$.
\end{theorem}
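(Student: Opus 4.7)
The plan is to reduce the statement to the combination of Theorem \ref{thm:gamma1} and Lemma \ref{lemma:Mf}, together with the $\mathfrak{S}_{3}$-symmetry of the reduced Kronecker coefficients. Theorem \ref{thm:gamma1} supplies the inequality $\gamma_{1}\le|\alpha\cap\beta|+\max(\alpha_{1},\beta_{1})$ whenever $\overline{g}_{\alpha,\beta}^{\gamma}>0$. Since the right-hand side depends only on $\alpha$ and $\beta$, it is trivially a linear function of $(\alpha,\beta,\bar\gamma)$ of the shape demanded by Lemma \ref{lemma:Mf}. First I would feed this $f$ into Lemma \ref{lemma:Mf} to obtain an upper bound for $\stab(\alpha,\beta,\gamma)$.

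Next I would carry out the bookkeeping that converts the output of Lemma \ref{lemma:Mf} into the expression $M_{1}(\alpha,\beta;\gamma)=|\gamma|+|\bar\alpha\cap\bar\beta|+\alpha_{1}+\beta_{1}$. The identity
\[
|\alpha\cap\beta|+\max(\alpha_{1},\beta_{1})=|\bar\alpha\cap\bar\beta|+\min(\alpha_{1},\beta_{1})+\max(\alpha_{1},\beta_{1})=|\bar\alpha\cap\bar\beta|+\alpha_{1}+\beta_{1}
\]
makes the substitution transparent and yields $\stab(\alpha,\beta,\gamma)\le M_{1}(\alpha,\beta;\gamma)$.

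Finally I would invoke the $\mathfrak{S}_{3}$-symmetry. Since each $V_{\lambda}$ is self-dual for the symmetric group, the Kronecker coefficient $g_{\mu,\nu}^{\lambda}$ equals the dimension of the $\mathfrak{S}_{n}$-invariants in $V_{\mu}\otimes V_{\nu}\otimes V_{\lambda}$ and is therefore symmetric under every permutation of its three indices. Taking stable limits transports this symmetry to $\overline{g}_{\alpha,\beta}^{\gamma}$, and the definition of $\stab(\alpha,\beta,\gamma)$ inherits it directly. Permuting the roles of $\alpha$, $\beta$ and $\gamma$ in the argument above therefore yields the two extra bounds $\stab(\alpha,\beta,\gamma)\le M_{1}(\alpha,\gamma;\beta)$ and $\stab(\alpha,\beta,\gamma)\le M_{1}(\beta,\gamma;\alpha)$, and taking the minimum produces $N_{1}(\alpha,\beta,\gamma)$.

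The main obstacle is effectively absorbed into the ingredients: the combinatorial substance lies in Theorem \ref{thm:gamma1}, which provides the sharp bound on $\gamma_{1}$, and in Lemma \ref{lemma:Mf}, which encodes the passage from a bound on $\gamma_{1}$ to a bound on $\stab$. Once these are in hand, Theorem \ref{theorem:N1} is a formal consequence obtained by the symbolic manipulation and symmetry argument sketched above.
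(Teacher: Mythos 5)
Your route coincides with the paper's: specialize Lemma \ref{lemma:Mf} to $f(\alpha,\beta,\tau)=|\bar{\alpha}\cap\bar{\beta}|+\alpha_1+\beta_1$, obtained from Theorem \ref{thm:gamma1} via the identity $|\alpha\cap\beta|+\max(\alpha_1,\beta_1)=|\bar{\alpha}\cap\bar{\beta}|+\alpha_1+\beta_1$, and then use the $\mathfrak{S}_3$--symmetry of the Kronecker coefficients (hence of $\overline{g}$ and of $\stab$) to obtain the two permuted bounds and take the minimum. That skeleton, and your justification of the symmetry, are fine.

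The gap is in the claim that this $f$ is ``trivially \dots of the shape demanded by Lemma \ref{lemma:Mf}''. The hypothesis \eqref{eq:condition} of that lemma is not merely $\gamma_1\le f(\alpha,\beta,\bar{\gamma})$ on $\Supp(\alpha,\beta)$; it asks that
\[
\mathcal{M}_f(\alpha,\beta,\gamma)=|\gamma|+|\bar{\alpha}\cap\bar{\beta}|+\alpha_1+\beta_1\;\ge\;\max\bigl(|\alpha|+\alpha_1,\;|\beta|+\beta_1,\;|\gamma|+\gamma_1\bigr)
\]
whenever $\overline{g}_{\alpha,\beta}^{\gamma}>0$. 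Theorem \ref{thm:gamma1} delivers only the comparison with $|\gamma|+\gamma_1$. The comparisons with $|\alpha|+\alpha_1$ and $|\beta|+\beta_1$ are genuinely needed --- they guarantee that $\alpha[n]$ and $\beta[n]$ are partitions for $n\ge\mathcal{M}_f$, which both the definition of $\stab(\alpha,\beta,\gamma)$ and the application of Theorem \ref{theorem:g in gbar} inside the proof of Lemma \ref{lemma:Mf} require --- and they are not formal consequences of the shape of $f$. Indeed, $\mathcal{M}_f\ge|\alpha|+\alpha_1$ amounts to $|\gamma|\ge|\alpha|-|\bar{\alpha}\cap\bar{\beta}|-\beta_1$, and since $|\bar{\alpha}\cap\bar{\beta}|+\beta_1$ can be strictly smaller than $|\beta|$ (e.g.\ $\alpha=(5)$, $\beta=(2,1)$), Murnaghan's inequality $|\gamma|\ge|\alpha|-|\beta|$ is too weak here. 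One needs the sharper lower bound $|\gamma|\ge\max(|\alpha|,|\beta|)-|\alpha\cap\beta|$ on $\Supp(\alpha,\beta)$ from Theorem \ref{minmaxgamma} (which rests on Proposition \ref{prop:Dvir}), combined with $|\alpha\cap\beta|=|\bar{\alpha}\cap\bar{\beta}|+\min(\alpha_1,\beta_1)\le|\bar{\alpha}\cap\bar{\beta}|+\beta_1$, and symmetrically for $\beta$. Supplying this verification closes the gap; the rest of your argument stands.
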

The second bound is obtained by applying
Lemma \ref{lemma:Mf} to the bound \eqref{eq:g and g1} obtained in Theorem  \ref{thm:g and g1}.
\begin{theorem}\label{theorem:N2}
Let 
\[
N_2(\alpha,\beta,\gamma)
=\left[ 
\frac{|\alpha|+|\beta|+|\gamma|+\alpha_1+\beta_1+\gamma_1}{2}
\right]
\]
where $[x]$ denotes the integer part of $x$.
Then $\stab(\alpha,\beta,\gamma)\leq N_2(\alpha, \beta, \gamma)$.
\end{theorem}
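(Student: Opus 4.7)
The plan is to reduce the theorem to a direct application of Lemma \ref{lemma:Mf} by extracting a uniform upper bound on $\gamma_1$ from the identity \eqref{eq:g and g1} of Theorem \ref{thm:g and g1}, in complete analogy with how $N_1$ was produced from Theorem \ref{thm:gamma1}.

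First, I would observe that \eqref{eq:g and g1} gives, for every $\gamma\in\Supp(\alpha,\beta)$, the inequality $|\gamma|+\gamma_1 \leq |\alpha|+|\beta|+\alpha_1+\beta_1$. Writing $|\gamma|=\gamma_1+|\bar\gamma|$ and solving for $\gamma_1$, this rearranges to
\[
\gamma_1 \;\leq\; \frac{|\alpha|+|\beta|+\alpha_1+\beta_1-|\bar\gamma|}{2},
\]
so the right-hand side defines an affine form $f(\alpha,\beta,\bar\gamma)$ that bounds $\gamma_1$ uniformly on $\Supp(\alpha,\beta)$, which is exactly the kind of input Lemma \ref{lemma:Mf} requires. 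Feeding this $f$ into Lemma \ref{lemma:Mf} (just as $f(\alpha,\beta,\bar\gamma)=\alpha_1+\beta_1+|\bar\alpha\cap\bar\beta|$ was fed in to obtain $M_1$) yields the upper bound
\[
\stab(\alpha,\beta,\gamma) \;\leq\; |\gamma| + f(\alpha,\beta,\bar\gamma) \;=\; |\gamma| + \frac{|\alpha|+|\beta|+\alpha_1+\beta_1-|\bar\gamma|}{2}.
\]
Substituting $|\bar\gamma|=|\gamma|-\gamma_1$ and combining terms collapses the right-hand side to $\tfrac{1}{2}\bigl(|\alpha|+|\beta|+|\gamma|+\alpha_1+\beta_1+\gamma_1\bigr)$. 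Because $\stab(\alpha,\beta,\gamma)$ is an integer, the inequality persists with the right-hand side replaced by its integer part, which is precisely $N_2(\alpha,\beta,\gamma)$.

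The proof is therefore almost entirely mechanical once one has Lemma \ref{lemma:Mf} and Theorem \ref{thm:g and g1} in hand. The only point that requires a small amount of care is that the derived $f$ has a factor of $1/2$ in front of $|\bar\gamma|$, rather than integer coefficients; I would verify that Lemma \ref{lemma:Mf} is stated (or can be applied) for affine forms with rational coefficients, which is harmless since the final rounding to $N_2$ via the integer part absorbs the half-integer. Beyond this minor bookkeeping, there is no essential obstacle, and the symmetry of $N_2$ in $(\alpha,\beta,\gamma)$ is automatic from the symmetry of the bound $|\gamma|+\gamma_1 \leq |\alpha|+|\beta|+\alpha_1+\beta_1$ under permutations of the three partitions.
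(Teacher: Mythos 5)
Your proposal is correct and follows essentially the same route as the paper: the authors likewise apply Lemma \ref{lemma:Mf} with $f(\alpha,\beta,\tau)=\tfrac{1}{2}\bigl(|\alpha|+|\beta|+\alpha_1+\beta_1-|\tau|\bigr)$, obtained by rearranging the inequality $|\gamma|+\gamma_1\leq|\alpha|+|\beta|+\alpha_1+\beta_1$ from Theorem \ref{thm:g and g1}, and then take the integer part. The only cosmetic difference is that the paper verifies the full hypothesis \eqref{eq:condition} (the maximum over all three partitions) by invoking the symmetry of the reduced Kronecker coefficients, a point you address only implicitly in your closing remark.
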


We finish our work by placing the new bounds in the context of the current literature.
We show in Proposition \ref{prop:comparison} that $N_1$ beats those of Ernesto Vallejo \cite{Vallejo} and Michel Brion  \cite{Brion:Foulkes}. But neither one is better than the other since there are infinite families of examples  where $N_1 < N_2$ (see the Example \ref{ex:three hooks}  on the Kronecker coefficients indexed by three hooks), and others  where $N_2<N_1$ (see the Example \ref{ex:two two-row} on the Kronecker coefficients indexed by two two-row shapes). 
Finally, we revisit the work of   Rosas \cite{Rosas:2001}, Ballantine and Orellana \cite{Ballantine:Orellana}, and \cite{Briand:Orellana:Rosas:Chamber} where the situation for some restricted families of Kronecker coefficients  is addressed.




\section{The reduced Kronecker coefficients}\label{sec:RKC}



In this section we show how to recover the Kronecker coefficients from the knowledge of the reduced Kronecker coefficients. We also present an expression for the reduced Kronecker coefficients as sums of nonnegative terms, involving Littlewood--Richardson coefficients as well as Kronecker coefficients, that will be useful in the next two sections.



We denote by $\langle \ |\  \rangle$ the Hall inner product on symmetric functions. 
Recall that Formula \eqref{eq:g to gbar} in Theorem \ref{theorem:g in gbar} shows that we can recover the Kronecker coefficients from the reduced ones:
\[
g_{\mu\nu}^\lambda
=
\sum_{i=1}^{\ell(\mu)\ell(\nu)} (-1)^{i+1} \bar{g}_{\bar{\mu}
\bar{\nu}}^{\lambda^{\dagger i}}.
\]
 We now provide the proof.
\begin{proof}[Proof of Theorem \ref{theorem:g in gbar}] 
Murnaghan's theorem tells us that
\[
s_\mu\ast s_\nu=\sum_{\gamma \in \Supp(\bar{\mu},\bar{\nu})}
\bar{g}_{\bar{\mu}\bar{\nu}}^\gamma s_{\gamma[n]}
\]
Performing the scalar product with $s_{\lambda}$ in the preceding
equation yields:
\begin{equation}\label{eq:ggg}
g_{\mu,\nu}^{\lambda}=\sum_{\gamma \in \Supp(\bar{\mu},\bar{\nu})}
\bar{g}_{\bar{\mu}\bar{\nu}}^\gamma
\scalar{s_{\gamma[n]}}{s_{\lambda}}
\end{equation}

Consider a particular $\gamma \in \Supp(\bar{\mu},\bar{\nu})$ such
that $\scalar{s_{\gamma[n]}}{s_{\lambda}}\neq 0$. Let $k$ be its
length. Then $\lambda$ has length at most $k+1$ and  the Jacobi--Trudi
determinants $s_{\gamma[n]}$ and $s_{\lambda}$ have the same columns,
up to the order, see Eq. \eqref{eq:JT}. That is, the sequence 
\[
v = (n-|\gamma|,
\gamma_1,\gamma_2,\ldots,\gamma_k)+(k+1,k,k-1,\ldots,1)
\]
is a
permutation of the decreasing sequence 
$u = \lambda +
(k+1,k,k-1,\ldots,1)$. (As usual one sets $\lambda_j=0$ for $j>
\ell(\lambda)$.)

By construction, we have that  $v$ is decreasing starting at $v_2$. Therefore, there
exists an index $i$ such that $u_j = v_j +1$ for all $j < i$ and $u_j
= v_j$ for all $j > i$. This means that $\gamma = \lambda^{\dagger i}$
for some $i \leq k+1$. Since $\gamma \in \Supp(\bar{\mu},\bar{\nu})$
there is $k \leq \ell_1 \ell_2-1$ and thus $i \leq k$. 

Finally
$\scalar{s_{\gamma[n]}}{s_{\lambda}}$ is the sign of the permutation
that transforms  $v$ into the decreasing sequence $u$. This
permutation is the cycle $(i, i-1,\ldots, 2, 1)$, which has sign
$(-1)^{i+1}$.
This shows that only the partitions $\gamma=\lambda^{\dagger i}$, for
$i$ between $1$ and $\ell_1\ell_2$, contribute to the sum in the
right--hand side of \eqref{eq:ggg}, and that the contribution of
$\lambda^{\dagger i}$ is $(-1)^{i+1} \bar{g}_{\bar{\mu}\bar{\nu}}^\gamma$.

\end{proof}

The operator on symmetric functions  $f \mapsto f^\perp$ is defined  as the operator dual to multiplication with respect to the inner product, $\langle \, |\, \rangle$. 

Define $c_{\alpha,\beta,\gamma}^\delta$ as the coefficients of
$s_{\delta}$ in $s_{\alpha}s_\beta s_\gamma$. From the definition of
the Littlewood--Richardson coefficients as the structural constant for
the product of two Schur functions, we immediately obtain that
\begin{equation}\label{3LR}
c_{\alpha,\beta,\gamma}^\delta = \sum_\varphi c_{\alpha,\beta}^\varphi c_{\varphi,\gamma}^\delta
\end{equation}

\begin{lemma}\label{prop:Littlewood}
Let $\alpha$, $\beta$, $\gamma$ be partitions. Then $\overline{g}_{\alpha,\beta}^{\gamma}$ is positive if and only if there exist partitions $\delta$, $\epsilon$, $\zeta$, $\rho$, $\sigma$, $\tau$ such that all four coefficients $g_{\delta,\epsilon}^{\zeta}$,  $c_{\delta,\sigma,\tau}^{\alpha}$,  $c_{\epsilon,\rho,\tau}^{\beta}$ and  $c_{\zeta,\rho,\sigma}^{\gamma}$ are positive. Moreover,
\begin{align}\label{trick}
\overline{g}_{\alpha,\beta}^{\gamma}=
\sum g_{\delta,\epsilon}^{\zeta} c_{\delta,\sigma,\tau}^{\alpha} c_{\epsilon,\rho,\tau}^{\beta} c_{\zeta,\rho,\sigma}^{\gamma}
\end{align}

\end{lemma}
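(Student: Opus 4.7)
The plan is to compute the scalar product
$$\bigl\langle\,s_{\alpha[n]}\ast s_{\beta[n]}\,,\;s_{\gamma[n]}\bigr\rangle,$$
which by Murnaghan's Theorem equals $\overline{g}^\gamma_{\alpha,\beta}$ for $n$ sufficiently large, by iterated application of Littlewood's formula for the Kronecker product,
$$(fg)\ast h \;=\; \sum_{(h)} \bigl(f\ast h^{(1)}\bigr)\bigl(g\ast h^{(2)}\bigr),$$
which reflects the duality between multiplication and the coproduct $\Delta\colon f(X)\mapsto f(X+Y)$. Recall that $\Delta(s_\lambda)=\sum c^\lambda_{\mu,\nu}\,s_\mu\otimes s_\nu$, that $\Delta(s_{(m)})=\sum_{a+b=m}s_{(a)}\otimes s_{(b)}$ by Pieri, and that the trivial character absorbs the Kronecker product, $s_\lambda\ast s_{(|\lambda|)}=s_\lambda$.

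The strategy is to write, in the stable range, $s_{\alpha[n]}$, $s_{\beta[n]}$, $s_{\gamma[n]}$ as the distinguished summands of $s_\alpha\cdot s_{(n-|\alpha|)}$, $s_\beta\cdot s_{(n-|\beta|)}$, $s_\gamma\cdot s_{(n-|\gamma|)}$ respectively, and then to apply Littlewood's formula three times, once to each of the three resulting products. Each application splits one of $s_\alpha$, $s_\beta$, $s_\gamma$ into a pair of Schur functions via $\Delta$, so after all three applications the six partitions $\delta,\epsilon,\zeta,\rho,\sigma,\tau$ appear naturally, organized so that $\alpha$ is split as $(\delta,\sigma,\tau)$, $\beta$ as $(\epsilon,\rho,\tau)$, and $\gamma$ as $(\zeta,\rho,\sigma)$. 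The pairings of the three auxiliary ``padding'' rows $s_{(n-|\cdot|)}$ forced by the absorption identity and by $\Delta(s_{(m)})$ are exactly what constrain the partitions $\rho,\sigma,\tau$ to be shared pairwise between $(\beta,\gamma)$, $(\alpha,\gamma)$, $(\alpha,\beta)$. The remaining inner Schur functions $s_\delta, s_\epsilon, s_\zeta$ contribute a genuine Kronecker factor $g^\zeta_{\delta,\epsilon}$. Using the three-factor identity \eqref{3LR} to fuse each pair of consecutive LR coefficients into a single $c^\alpha_{\delta,\sigma,\tau}$, $c^\beta_{\epsilon,\rho,\tau}$, $c^\gamma_{\zeta,\rho,\sigma}$ then yields exactly the right-hand side of \eqref{trick}.

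The ``positive iff positive'' statement of the lemma follows automatically: since each summand is a product of four nonnegative integers, $\overline{g}^\gamma_{\alpha,\beta}$ is strictly positive precisely when at least one tuple $(\delta,\epsilon,\zeta,\rho,\sigma,\tau)$ makes all four coefficients simultaneously positive. The main obstacle I foresee is the combinatorial bookkeeping of the three nested applications of Littlewood's formula: one must argue that in the stable range the ``undistinguished'' summands of $s_\alpha\cdot s_{(n-|\alpha|)}$ (those different from $s_{\alpha[n]}$) either cancel between the two computations or contribute only to coefficients of Schur functions different from $s_{\gamma[n]}$, so that isolating the coefficient of $s_{\gamma[n]}$ recovers exactly the claimed closed form rather than a contaminated sum.
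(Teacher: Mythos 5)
The obstacle you flag at the end of your proposal is not a bookkeeping nuisance; it is a genuine gap that defeats the approach as you have set it up. The substitution of $s_{\lambda}\cdot s_{(n-|\lambda|)}$ for $s_{\lambda[n]}$ is not an identity: by Pieri, $s_{\lambda}s_{(n-|\lambda|)}=\sum_{\mu}c_{\lambda,(n-|\lambda|)}^{\mu}s_{\mu}$ contains $s_{\lambda[n]}$ as just one of many summands, and since every quantity in sight (Schur expansions, Littlewood--Richardson and Kronecker coefficients) is nonnegative, there is no cancellation mechanism that could make the extra summands disappear. They really do contaminate the answer. Concretely, take $\alpha=\beta=(1)$ and $\gamma=\emptyset$: then $s_{(1)}s_{(n-1)}=s_{(n)}+s_{(n-1,1)}$, and the coefficient of $s_{(n)}=s_{\emptyset}\cdot s_{(n)}$ in $\bigl(s_{(n)}+s_{(n-1,1)}\bigr)\ast\bigl(s_{(n)}+s_{(n-1,1)}\bigr)$ is $2$, whereas $\overline{g}_{(1),(1)}^{\emptyset}=\langle s_{(n-1,1)}\ast s_{(n-1,1)}, s_{(n)}\rangle=1$. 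So isolating the coefficient of $s_{\gamma[n]}$ after your three applications of the coproduct identity does \emph{not} recover $\overline{g}_{\alpha,\beta}^{\gamma}$; to repair this you would need an inclusion--exclusion over the unwanted Pieri terms (or, equivalently, the signed Jacobi--Trudi expansion of $s_{\lambda[n]}$ in terms of products $s_{\mu}h_{m}$), which is essentially the hard content of Littlewood's formula and is not supplied.

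The paper avoids this entirely by \emph{citing} Littlewood's identity $s_{\alpha[n]}\ast s_{\beta[n]}=U_{n}R_{\alpha,\beta}$, where $R_{\alpha,\beta}=\sum_{\delta,\epsilon,\tau}\bigl((s_{\delta}s_{\tau})^{\perp}s_{\alpha}\bigr)\bigl((s_{\epsilon}s_{\tau})^{\perp}s_{\beta}\bigr)\bigl(s_{\delta}\ast s_{\epsilon}\bigr)$ and $U_{n}(s_{\lambda})=s_{\lambda[n]}$; this identity already encodes, with the correct signs and pairings, the three-way splitting you are trying to produce by hand. Given that, the proof reduces to two routine steps: comparing with Murnaghan's theorem and using that $U_{n}$ is injective on symmetric functions of degree at most $n/2$ to conclude $R_{\alpha,\beta}=\sum_{\gamma}\overline{g}_{\alpha,\beta}^{\gamma}s_{\gamma}$, and then expanding $R_{\alpha,\beta}$ in the Schur basis via $(s_{\delta}s_{\tau})^{\perp}s_{\alpha}=\sum_{\sigma}c_{\delta,\sigma,\tau}^{\alpha}s_{\sigma}$ and Eq.~\eqref{3LR}. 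Your proposal also omits this injectivity step. If you want to keep your route, you must either invoke Littlewood's formula as a black box (at which point your argument collapses into the paper's) or carry out the signed elimination of the spurious Pieri terms, which is a substantially harder task than the ``combinatorial bookkeeping'' you describe.
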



\begin{proof}
Given partitions $\alpha$ and $\beta$, define the following symmetric function 
\[
R_{\alpha,\beta}=\sum_{\delta,\epsilon,\tau} 
\left( (s_{\delta} s_{\tau})^{\perp} s_{\alpha}\right)
\left( (s_{\epsilon} s_{\tau})^{\perp} s_{\beta}\right)
\left( s_{\delta} \ast s_{\epsilon} \right)
\]
where the sum is over all triples of partitions $\delta$, $\epsilon$, $\tau$. For $n$ integer, let $U_n$ be the linear operator on symmetric functions that sends the Schur function $s_{\lambda}$ to the Jacobi--Trudi determinant $s_{\lambda[n]}$. 
Littlewood showed in \cite{Littlewood:1958} that for all partitions $\alpha$ and $\beta$ and all integers $n$, 
\begin{equation}\label{eq:Littlewood}
s_{\alpha[n]} \ast s_{\beta[n]}=U_n R_{\alpha,\beta}
\end{equation}
Formula \eqref{eq:Littlewood} is also presented in \cite{Butler:King} (Formula 6.1) and \cite{Scharf:Thibon:Wybourne} (Formula 8). 

Comparing \eqref{eq:Littlewood} with Murnaghan's Theorem we see that $U_n R_{\alpha,\beta}=U_n \sum_{\gamma} \overline{g}_{\alpha,\beta}^{\gamma} s_{\gamma}$. The operator $U_n$ is not injective, but its restriction to the symmetric functions of degree at most $n/2$ is. Indeed, when $|\gamma| \leq n/2$, the sequence $\gamma[n]$ is a partition. Therefore, taking $n$ big enough we can deduce that $R_{\alpha,\beta}=\sum_{\gamma} \overline{g}_{\alpha,\beta}^{\gamma} s_{\gamma}$. 

Let us determine the expansion $\sum_{\gamma} r_{\alpha,\beta}^{\gamma} s_{\gamma}$ of $R_{\alpha,\beta}$ in the Schur basis. We have:
\begin{align*}
(s_{\delta} s_{\tau})^{\perp}s_{\alpha}&=\sum_{\sigma} c_{\delta,\sigma,\tau}^{\alpha} s_{\sigma},\\
(s_{\epsilon} s_{\tau})^{\perp}s_{\beta}&=\sum_{\rho} c_{\epsilon,\rho,\tau}^{\beta} s_{\rho},\\
s_{\delta} \ast s_{\epsilon}&=\sum_{\zeta} g^{\zeta}_{\delta,\epsilon} s_{\zeta}
\end{align*}
Therefore, 
\begin{align*}
R_{\alpha,\beta} &=
\sum g^{\zeta}_{\delta,\epsilon} c_{\delta,\sigma,\tau}^{\alpha} c_{\epsilon,\rho,\tau}^{\beta}  s_{\sigma} s_{\rho} s_{\tau}\\
&=
\sum g^{\zeta}_{\delta,\epsilon} c_{\delta,\sigma,\tau}^{\alpha} c_{\epsilon,\rho,\tau}^{\beta} c_{\sigma,\rho,\tau}^{\gamma} s_{\gamma}
\end{align*}
We obtain Eq. (\ref{trick}).
\end{proof}

\section{Stability : The Kronecker product}\label{sec:product}


In this section we consider  the stability of the Kronecker product of Schur functions. We provide a proof for Theorem \ref{thm:global} which provides a sharp bound for this stability. 

\begin{lemma}
\label{lemma:stabbound} 
Let $\alpha$ and $\beta$ be partitions. Then 
\[
\stab(\alpha,\beta) =\maxsupp{|\gamma|+\gamma_1}
\]
\end{lemma}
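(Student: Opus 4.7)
My plan is to establish the two inequalities $\stab(\alpha,\beta) \leq M$ and $\stab(\alpha,\beta) \geq M$ separately, where $M$ denotes the maximum $\maxsupp{|\gamma|+\gamma_1}$. The key elementary fact I will use repeatedly is that the sequence $\gamma[n]=(n-|\gamma|,\gamma_1,\gamma_2,\ldots)$ is a genuine partition if and only if $n \geq |\gamma|+\gamma_1$.

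For the upper bound I would take $n \geq M$ and invoke Murnaghan's identity \eqref{eq:thm Mur}. Since every $\gamma \in \Supp(\alpha,\beta)$ satisfies $|\gamma|+\gamma_1 \leq n$, each $\gamma[n]$ is a genuine partition and Murnaghan's identity is the honest Schur expansion of $s_{\alpha[n]}\ast s_{\beta[n]}$. Since $V^k$ adds $k$ to the first part of each index on the Schur basis, one has $V^k(s_{\alpha[n]}\ast s_{\beta[n]}) = \sum_\gamma \overline{g}_{\alpha\beta}^\gamma\,s_{\gamma[n+k]}$, which is precisely the Schur expansion of $s_{\alpha[n+k]}\ast s_{\beta[n+k]}$ given by Murnaghan at $n+k$. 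Hence the stability relation holds for every $k>0$, giving $\stab(\alpha,\beta) \leq M$.

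For the lower bound I would pick a partition $\gamma^*$ in $\Supp(\alpha,\beta)$ achieving $|\gamma^*|+\gamma^*_1 = M$ and examine $n = M$. Again Murnaghan's identity at $n=M$ is the genuine Schur expansion, so $s_{\gamma^*[M]}$ appears there with coefficient $\overline{g}_{\alpha\beta}^{\gamma^*}>0$. Its indexing partition is $(M-|\gamma^*|,\gamma^*_1,\gamma^*_2,\ldots) = (\gamma^*_1,\gamma^*_1,\gamma^*_2,\ldots)$, whose first two parts coincide. On the other hand, $V$ sends $s_\lambda$ to $s_{\lambda + (1)}$, so every Schur function in the image of $V$ is indexed by a partition whose first part strictly exceeds its second part. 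Therefore $s_{\alpha[M]}\ast s_{\beta[M]}$ is not in the image of $V$, so the relation $s_{\alpha[M]}\ast s_{\beta[M]} = V\bigl(s_{\alpha[M-1]}\ast s_{\beta[M-1]}\bigr)$ fails, proving $\stab(\alpha,\beta) \geq M$.

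The one point requiring care is the claim that at $n=M$ no Jacobi--Trudi cancellation can erase the contribution of $\gamma^*$. This is settled by the observation that at $n=M$ every $\gamma[M]$ with $\gamma \in \Supp(\alpha,\beta)$ is already a genuine partition, and the map $\gamma \mapsto \gamma[M]$ is injective (since $\gamma[M]$ determines $|\gamma|$, hence $\gamma$), so the Murnaghan sum is a sum over distinct Schur basis elements with nonnegative coefficients.
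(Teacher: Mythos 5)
Your proposal is correct and follows essentially the same route as the paper: the upper bound via the observation that all $\gamma[n]$ for $\gamma\in\Supp(\alpha,\beta)$ are genuine partitions once $n\geq M$, and the lower bound via a maximizer $\gamma^*$ whose shifted partition $\gamma^*[M]$ has equal first two parts and hence indexes a Schur function outside the image of $V$. Your explicit remark that the Murnaghan sum at $n=M$ involves distinct Schur basis elements with nonnegative coefficients (so no cancellation can erase $s_{\gamma^*[M]}$) is a welcome clarification of a step the paper leaves implicit.
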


\begin{proof}
Let $N=\maxsupp{|\gamma|+\gamma_1}$. If $\alpha$ and $\beta$ are equal to the empty partition then $N=0=\stab(\alpha,\beta)$. In the other cases, that we consider now, we have $N>0$.

Remember (from the definition of $\stab(\alpha,\beta)$ in Section \ref{sec:main results}) that $V$ is the linear operator that fulfills $V\left(s_{\lambda}\right)=s_{\lambda+(1)}$ for all partitions $\lambda$. For all $\gamma \in \Supp(\alpha,\beta)$ and $k > 0$, the sequences $\gamma[N]$ and $\gamma[N+k]$ are  partitions, therefore $s_{\gamma[N+k]}=V^k\left(s_{\gamma[N]}\right)$. After Murnaghan's Theorem,
\begin{align*}
s_{\alpha[N]}\ast s_{\beta[N]} &=\sum_{\gamma \in \Supp(\alpha,\beta)} \overline{g}_{\alpha\beta}^\gamma s_{\gamma[N]},\\
s_{\alpha[N+k]}\ast s_{\beta[N+k]} &=\sum_{\gamma \in \Supp(\alpha,\beta)} \overline{g}_{\alpha\beta}^\gamma s_{\gamma[N+k]}.
\end{align*}
We obtain that:
\[
s_{\alpha[N+k]}\ast s_{\beta[N+k]}=V^k \left( s_{\alpha[N]}\ast s_{\beta[N]} \right).
\]
This proves that $N \geq \stab(\alpha,\beta)$.

The equality will be obtained by proving additionally that $N-1 < \stab(\alpha,\beta)$.
There exists a partition $\gamma \in \Supp(\alpha,\beta)$ such that $|\gamma|+\gamma_1=N$. Then $\gamma[N]$ is a partition with its first part equal to its second part. This shows that $s_{\gamma[N]}$ is not in the image of $V$. It follows that $s_{\alpha[N]} \ast s_{\beta[N]}$ is not in the image of $V$. In particular,  $s_{\alpha[N]} \ast s_{\beta[N]}$ is not equal to $V \left(s_{\alpha[N-1]} \ast s_{\beta[N-1]}\right)$. 
\end{proof}



\begin{theorem}\label{thm:g and g1}
Let $\alpha$, $\beta$ be partitions. Then,
\begin{equation}\tag{\ref{eq:g and g1}}
\maxsupp{|\gamma|+\gamma_1}=|\alpha|+|\beta|+\alpha_1+\beta_1.
\end{equation}
\end{theorem}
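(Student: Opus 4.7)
The plan is to prove the stated equality by two one-sided inequalities: the $\ge$ direction via an explicit witness and the harder $\le$ direction via the nonnegative decomposition of Lemma \ref{prop:Littlewood}.

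For the lower bound I would take $\gamma=\alpha+\beta$. Since $|\gamma|=|\alpha|+|\beta|$, the Murnaghan--Littlewood theorem identifies $\overline{g}_{\alpha,\beta}^{\alpha+\beta}$ with the Littlewood--Richardson coefficient $c_{\alpha,\beta}^{\alpha+\beta}$. The skew shape $(\alpha+\beta)/\alpha$ has exactly $\beta_i$ cells in row $i$, and the filling putting the entry $i$ in every cell of row $i$ is visibly the unique Littlewood--Richardson tableau of content $\beta$; so $c_{\alpha,\beta}^{\alpha+\beta}=1$, $\alpha+\beta\in\Supp(\alpha,\beta)$, and $|\alpha+\beta|+(\alpha+\beta)_1=|\alpha|+|\beta|+\alpha_1+\beta_1$, giving the lower bound.

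For the upper bound I would fix any $\gamma\in\Supp(\alpha,\beta)$ and apply Lemma \ref{prop:Littlewood} to pick partitions $\delta,\epsilon,\zeta,\rho,\sigma,\tau$ with $g_{\delta,\epsilon}^{\zeta}$, $c_{\delta,\sigma,\tau}^{\alpha}$, $c_{\epsilon,\rho,\tau}^{\beta}$, $c_{\zeta,\rho,\sigma}^{\gamma}$ all positive. I would then collect a short list of elementary consequences: the weight identities $|\gamma|=|\zeta|+|\rho|+|\sigma|$, $|\zeta|=|\delta|=|\epsilon|$, and $|\alpha|+|\beta|=|\delta|+|\epsilon|+|\rho|+|\sigma|+2|\tau|$; the first-row bound $\gamma_1\leq\zeta_1+\rho_1+\sigma_1$ coming from $c_{\zeta,\rho,\sigma}^{\gamma}>0$ (via the usual row inequality for products of Schur functions, expanded through \eqref{3LR}); the trivial $\zeta_1\leq|\zeta|=|\delta|$; and the componentwise containments $\sigma\subseteq\alpha$ and $\rho\subseteq\beta$, which follow from the positivity of the triple Littlewood--Richardson coefficients through \eqref{3LR} and which give $\sigma_1\leq\alpha_1$ and $\rho_1\leq\beta_1$. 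Chaining these yields
\[
|\gamma|+\gamma_1\leq 2|\delta|+|\rho|+|\sigma|+\rho_1+\sigma_1\leq 2|\delta|+|\rho|+|\sigma|+\alpha_1+\beta_1\leq|\alpha|+|\beta|+\alpha_1+\beta_1,
\]
as required.

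The step I expect to be the main obstacle is recognizing how loosely the Kronecker factor $g_{\delta,\epsilon}^{\zeta}$ must be used: only through the weight identity $|\zeta|=|\delta|$ and the crude bound $\zeta_1\leq|\zeta|$. Sharper-looking estimates on $\zeta_1$ (e.g.\ of Klemm--Dvir type) do not telescope cleanly with the Littlewood--Richardson data on the three outer factors, and discarding this finer Kronecker information is precisely what makes everything collapse to the sharp constant $|\alpha|+|\beta|+\alpha_1+\beta_1$.
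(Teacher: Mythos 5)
Your proposal is correct and follows essentially the same route as the paper: the upper bound is extracted from the nonnegative decomposition of Lemma \ref{prop:Littlewood} using exactly the same ingredients ($\gamma_1\le\zeta_1+\rho_1+\sigma_1$, the crude $\zeta_1\le|\zeta|$, the weight identities, and $\sigma\subseteq\alpha$, $\rho\subseteq\beta$), and the lower bound is witnessed by $\gamma=\alpha+\beta$ via the Murnaghan--Littlewood theorem. The only cosmetic difference is that you aggregate the weight bookkeeping into the single identity $|\alpha|+|\beta|=|\delta|+|\epsilon|+|\rho|+|\sigma|+2|\tau|$, whereas the paper splits it as $|\delta|+|\sigma|+\sigma_1\le|\alpha|+\alpha_1$ and $|\epsilon|+|\rho|+\rho_1\le|\beta|+\beta_1$.
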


\begin{proof}
Let $\gamma$ be a partition such that $\overline{g}_{\alpha,\beta}^{\gamma}>0$. By Lemma \ref{prop:Littlewood}, there exist partitions $\delta$, $\epsilon$, $\zeta$, $\rho$, $\sigma$, $\tau$ such that all four coefficients $g_{\delta,\epsilon}^{\zeta}$,  $c_{\delta,\sigma,\tau}^{\alpha}$,  $c_{\epsilon,\rho,\tau}^{\beta}$ and  $c_{\zeta,\rho,\sigma}^{\gamma}$ are positive. 

The Littlewood--Richardson rule together with Eq. \eqref{3LR} implies that if $c_{\zeta,\rho,\sigma}^{\gamma}>0$ then $\gamma_1 \leq \zeta_1+\rho_1+\sigma_1$. 
Since $c_{\zeta,\rho,\sigma}^{\gamma}>0$, we have also $|\gamma|=|\zeta|+|\rho|+|\sigma|$. Therefore $|\gamma|+\gamma_1 \leq |\zeta|+\zeta_1+|\rho|+\rho_1+|\sigma|+\sigma_1$. Obviously $\zeta_1 \leq |\zeta|$. Thus 
\begin{equation}\label{eq:12}
|\gamma|+\gamma_1 \leq 2\,|\zeta|+|\rho|+\rho_1+|\sigma|+\sigma_1
\end{equation}

Since $g_{\delta,\epsilon}^{\zeta}>0$ we have $|\zeta|=|\delta|=|\epsilon|$. Replacing $2|\zeta|$ with $|\delta|+|\epsilon|$ in \eqref{eq:12} yields 
\begin{equation}\label{eq:13}
|\gamma|+\gamma_1 \leq |\delta|+|\sigma|+\sigma_1+|\epsilon|+|\rho|+\rho_1
\end{equation}
Since  $c_{\delta,\sigma,\tau}^{\alpha}>0$ we have  $\sigma \subset \alpha$ and thus $\sigma_1 \leq \alpha_1$. We have also $|\delta|+|\sigma|\leq |\alpha|$. Therefore  $|\delta|+|\sigma|+\sigma_1 \leq |\alpha|+\alpha_1$. 

Similarly, $c_{\epsilon,\rho,\tau}^{\beta}>0$ implies $|\epsilon|+|\rho|+\rho_1 \leq |\beta|+\beta_1$. 

Substituting these two new inequalities in \eqref{eq:13} provides the following inequality
\[
|\gamma|+\gamma_1 \leq |\alpha|+|\beta|+\alpha_1+\beta_1.
\]

We now show that the bound is achieved. 
Consider the reduced Kronecker coefficient $\overline{g}_{\alpha,\beta}^{\alpha+\beta}$. The Murnaghan--Littlewood theorem implies that it is equal to the Littlewood--Richardson coefficient $c_{\alpha,\beta}^{\alpha+\beta}$ which is equal to $1$. This proves that the upper bound $|\alpha|+|\beta|+\alpha_1+\beta_1$ on $\Supp(\alpha,\beta)$, for $|\gamma|+\gamma_1$, is reached with $\gamma=\alpha+\beta$.
\end{proof}

Theorem \ref{thm:global} is now a direct consequence of Lemma \ref{lemma:stabbound} and Theorem \ref{thm:g and g1}.


\section{Bounds for linear forms on $\Supp(\alpha,\beta)$}\label{sec:bounds}

In this section we provide proofs for the bounds of the lengths of the rows of $\gamma$ when $\overline{g}_{\alpha,\beta}^\gamma >0$.  In particular, we provide a sharp bound for the first row and upper bounds for the remaining rows.  Theorem \ref{thm:gamma1} gives a first step towards describing the set partitions indexing  the nonzero reduced Kronecker coefficients, that is $\mbox{Supp}(\alpha,\beta)$. 

Indeed, we show that

\begin{theorem}\label{thm:gamma1} 

Let $\alpha$ and $\beta$ be partitions,  then
\[
\maxsupp{\gamma_1}
=
|\alpha \cap \beta|+\max(\alpha_1,\beta_1)
\]
\end{theorem}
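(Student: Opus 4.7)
The plan is to establish the equality by proving upper and lower bounds separately. For the upper bound, I will invoke Lemma~\ref{prop:Littlewood}: if $\overline{g}_{\alpha,\beta}^\gamma>0$, there exist partitions $\delta,\epsilon,\zeta,\rho,\sigma,\tau$ making the four coefficients $g_{\delta,\epsilon}^\zeta$, $c_{\delta,\sigma,\tau}^\alpha$, $c_{\epsilon,\rho,\tau}^\beta$, $c_{\zeta,\rho,\sigma}^\gamma$ simultaneously positive. From $c_{\zeta,\rho,\sigma}^\gamma>0$ the Weyl inequality (applied to a triple of Hermitian matrices whose eigenvalues are $\zeta$, $\rho$, $\sigma$ and whose sum has eigenvalues $\gamma$) yields $\gamma_1\le \zeta_1+\rho_1+\sigma_1$. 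Dvir's theorem (Proposition~\ref{prop:Dvir}), applied to the Kronecker factor $g_{\delta,\epsilon}^\zeta>0$, yields $\zeta_1\le|\delta\cap\epsilon|$. Because $\delta\subset\alpha$ and $\epsilon\subset\beta$, this decomposes as $\zeta_1\le\min(\delta_1,\epsilon_1)+|\bar\delta\cap\bar\epsilon|\le\min(\delta_1,\epsilon_1)+|\bar\alpha\cap\bar\beta|$. The containments $\sigma\subset\alpha$, $\rho\subset\beta$ give $\sigma_1\le\alpha_1$, $\rho_1\le\beta_1$. Combining these bounds with the size identities $|\delta|=|\epsilon|=|\zeta|$, $|\delta|+|\sigma|+|\tau|=|\alpha|$ and $|\epsilon|+|\rho|+|\tau|=|\beta|$ produces $\gamma_1\le|\bar\alpha\cap\bar\beta|+\alpha_1+\beta_1=|\alpha\cap\beta|+\max(\alpha_1,\beta_1)$.

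For the matching lower bound I build an explicit decomposition that attains the maximum. Assume without loss of generality that $\alpha_1\ge\beta_1$, so the target becomes $|\bar\alpha\cap\bar\beta|+\alpha_1+\beta_1$. In Lemma~\ref{prop:Littlewood} I take $\delta=\epsilon=\bar\alpha\cap\bar\beta$, $\tau=\emptyset$, and $\zeta$ equal to the one-row partition $(|\bar\alpha\cap\bar\beta|)$; then $g^\zeta_{\delta,\epsilon}=1$ since the trivial representation appears with multiplicity one in $V_\delta\otimes V_\delta$. A straightforward check (using $\delta_i=\min(\alpha_{i+1},\beta_{i+1})$) shows that the skew shape $\alpha/\delta$ has width exactly $\alpha_1$, hence one can find a partition $\sigma$ with $\sigma_1=\alpha_1$ and $c^\alpha_{\delta,\sigma}\ge 1$ by placing a $1$ in the topmost cell of every column of $\alpha/\delta$ and completing the tableau; analogously one selects $\rho$ with $\rho_1=\beta_1$ and $c^\beta_{\epsilon,\rho}\ge 1$. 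Setting $\gamma=\zeta+\rho+\sigma$ (partition sum) gives $c^\gamma_{\zeta,\rho,\sigma}\ge 1$ via the straight Littlewood--Richardson tableau, so the summand corresponding to this choice in Lemma~\ref{prop:Littlewood} is at least $1$; hence $\overline{g}_{\alpha,\beta}^\gamma>0$ with $\gamma_1=\zeta_1+\sigma_1+\rho_1=|\bar\alpha\cap\bar\beta|+\alpha_1+\beta_1$, realizing the bound.

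The principal obstacle is the upper bound. The straightforward chain $\gamma_1\le\zeta_1+\sigma_1+\rho_1\le|\delta\cap\epsilon|+\alpha_1+\beta_1\le|\alpha\cap\beta|+\alpha_1+\beta_1$ exceeds the target by $\min(\alpha_1,\beta_1)$, so one must extract additional savings from the interplay of the constraints: any excess $\zeta_1-|\bar\alpha\cap\bar\beta|$ is bounded by $\min(\delta_1,\epsilon_1)$, which forces $|\delta|=|\epsilon|$ to be large, which in turn shrinks the available budgets $|\sigma|\le|\alpha|-|\delta|-|\tau|$ and $|\rho|\le|\beta|-|\epsilon|-|\tau|$ by the same amount, pushing $\sigma_1+\rho_1$ below $\alpha_1+\beta_1$ by exactly the compensating deficit. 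Making this trade-off precise is the technical heart of the proof.
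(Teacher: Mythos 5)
Your lower-bound construction is correct and is essentially the paper's: take $\delta=\epsilon=\bar{\alpha}\cap\bar{\beta}$, $\tau=\emptyset$, $\zeta$ the one-row partition of $|\delta|$, and $\sigma,\rho$ with first parts $\alpha_1,\beta_1$; the only soft spot is ``completing the tableau,'' which is routine. The genuine gap is in the upper bound. You have correctly located the difficulty (the naive chain overshoots by $\min(\alpha_1,\beta_1)$), but the mechanism you sketch to recover it does not work. You propose that a large $|\delta|=|\epsilon|$ shrinks the budgets $|\sigma|\le|\alpha|-|\delta|-|\tau|$ and $|\rho|\le|\beta|-|\epsilon|-|\tau|$ and hence $\sigma_1+\rho_1$ ``by exactly the compensating deficit.'' This bookkeeping by weights is insufficient. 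Take $\alpha=(3,3)$, $\beta=(3,1,1,1)$, $\delta=\epsilon=(2,1)$, $\tau=\emptyset$ (a realizable configuration, e.g.\ $c_{(2,1),(2,1)}^{(3,3)}>0$): then $|\delta\cap\epsilon|=3$, while $\min(\alpha_1,|\alpha|-|\delta|)=3$ and $\min(\beta_1,|\beta|-|\epsilon|)=3$, so your estimates only yield $\gamma_1\le 9$, whereas the target $|\bar{\alpha}\cap\bar{\beta}|+\alpha_1+\beta_1$ equals $7$. The true saving does not come from $|\sigma|$ being small; it comes from column-strictness of the Littlewood--Richardson filling, which here forbids $\sigma_1=3$ altogether.

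The ingredient you are missing is the paper's Lemma \ref{lemma:H}: if $c_{\delta,\sigma,\tau}^{\alpha}>0$, there is a set $A$ (a horizontal strip of cells of $\delta$ not lying under $\bar{\kappa}$ for an intermediate partition $\kappa$) with $D(\delta)\subset D(\bar{\alpha})\cup A$ and $|A|+\sigma_1\le\alpha_1$; the proof of that inequality is a width argument on LR tableaux of shape $\kappa/\delta$. Applying it to both Littlewood--Richardson factors gives $|\delta\cap\epsilon|\le|\bar{\alpha}\cap\bar{\beta}|+|A_1|+|A_2|$ with $|A_1|+\sigma_1\le\alpha_1$ and $|A_2|+\rho_1\le\beta_1$, and combining with Dvir's bound $\zeta_1\le|\delta\cap\epsilon|$ (Proposition \ref{prop:Dvir}) and $\gamma_1\le\zeta_1+\sigma_1+\rho_1$ closes the argument. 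In other words, the quantity that trades off against $\sigma_1$ is neither $\delta_1$ nor $|\alpha|-|\delta|$ but the size of the part of $\delta$ protruding above $\bar{\alpha}$; without a statement of this kind your upper bound stops at $|\alpha\cap\beta|+\alpha_1+\beta_1$, exactly as you acknowledge. The rest of your outline (Lemma \ref{prop:Littlewood}, Dvir, the Weyl/LR inequality for the first parts, and the explicit witness $\gamma=\zeta+\sigma+\rho$) coincides with the paper's route, so this one lemma is precisely what separates your sketch from a complete proof.
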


From Theorem \ref{thm:gamma1}, we obtain that given  any three partitions $\mu, \nu$ and $\lambda$ of
  $n$. If $g_{\mu,\nu}^\lambda>0$ then 
\[
\lambda_2\leq \min(\frac{n}{2}, |\bar{\mu}\cap \bar{\nu}|
+\max(\mu_2,\nu_2)).
\]

Fix two partitions $\alpha$ and $\beta$.  To prove Theorem \ref{thm:gamma1} we first prove an upper bound for all the rows of $\gamma$ whenever $\overline{g}_{\alpha,\beta}^\gamma>0$ (Theorem \ref{prop:otherparts}).  

For $\lambda$ partition and $k$ positive integer, set $E_k \lambda$ for the partition obtained from $\lambda$ by erasing its $k$--th part (or leaving $\lambda$ unchanged when it has less than $k$ parts). In particular $E_1 \lambda=\cut{\lambda}$.

\begin{lemma}\label{lemma:H}
Let $\alpha$, $\delta$, $\sigma$ and $\tau$ be partitions such that $c_{\delta,\sigma,\tau}^{\alpha} >0$. Let $i$ be a positive integer. Then there exists a set $A$ such that $D(\delta) \subset D(E_i \alpha) \cup A$ and $|A|+\sigma_k \leq \alpha_k$.
\end{lemma}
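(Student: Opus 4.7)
The plan is to take $A = D(\delta) \setminus D(E_i \alpha)$, which is automatically the minimal set making $D(\delta) \subseteq D(E_i \alpha) \cup A$ hold, and then to prove the inequality $|A| + \sigma_i \le \alpha_i$ (I read the statement as asserting the bound at row $k=i$). Since $c^{\alpha}_{\delta,\sigma,\tau}>0$ and Eq.~\eqref{3LR} give $c^{\alpha}_{\delta,\sigma,\tau} = \sum_{\varphi} c^{\varphi}_{\delta,\sigma}\, c^{\alpha}_{\varphi,\tau}$, I first fix an intermediate partition $\varphi$ with $\delta \subseteq \varphi \subseteq \alpha$ for which both $c^{\varphi}_{\delta,\sigma}$ and $c^{\alpha}_{\varphi,\tau}$ are positive, together with a Littlewood--Richardson tableau $T$ of shape $\varphi/\delta$ with content $\sigma$.

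Next I describe $A$ column by column. A column $j$ can contribute cells to $D(\delta)\setminus D(E_i\alpha)$ only through rows $b \ge i$ satisfying $b \le \delta'_j$ and $b \ge \alpha'_j$; since $\delta \subseteq \alpha$ forces $\delta'_j \le \alpha'_j$, this collapses to the unique possibility $b = \delta'_j = \alpha'_j \ge i$. Hence each column contributes one cell if $\delta'_j = \alpha'_j \ge i$ and none otherwise, so $|A|$ equals the number of columns $j$ with $\delta'_j = \alpha'_j \ge i$.

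The key observation concerns the entries equal to $i$ in $T$. By the Littlewood--Richardson property, entries in row $b$ of $T$ are bounded above by $b$, so every $i$ lies in a row of index $\ge i$; and by column-strictness each column of $T$ contains at most one $i$. Consequently $\sigma_i$, the total number of $i$'s in $T$, is at most the number of columns $j$ such that $\varphi/\delta$ possesses a cell in some row $\ge i$, i.e.\ columns with $\varphi'_j \ge i$ and $\delta'_j < \varphi'_j$. For any column $j$ counted by $|A|$ we have $\delta'_j = \alpha'_j \ge i$, and the chain $\delta'_j \le \varphi'_j \le \alpha'_j$ then forces $\delta'_j = \varphi'_j = \alpha'_j$; such a column contains no cell of $\varphi/\delta$ at all and therefore hosts no entry of $T$, so it is excluded from the set bounding $\sigma_i$.

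Putting the two counts together, the total number of columns $j$ with $\varphi'_j \ge i$ is $\varphi_i$, and these include the $|A|$ columns that have just been shown to contribute nothing to $\sigma_i$; therefore $\sigma_i \le \varphi_i - |A|$. Combined with $\varphi \subseteq \alpha$, which gives $\varphi_i \le \alpha_i$, this yields $|A| + \sigma_i \le \alpha_i$, as required. The main obstacle is the observation in the third paragraph: the weaker bounds $|A| \le \alpha_i$ (by telescoping $\alpha_k - \alpha_{k+1}$) and $\sigma_i \le \alpha_i$ each follow easily, but upgrading them to the joint inequality needs the fact that the columns responsible for making $|A|$ large are precisely the columns where $\varphi/\delta$ is empty, and hence cannot host any entry $i$.
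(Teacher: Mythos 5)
Your proof is correct and follows essentially the same route as the paper's: both factor $c^{\alpha}_{\delta,\sigma,\tau}$ through an intermediate partition via Eq.~\eqref{3LR} and then bound $|A|+\sigma_i$ by the $i$-th part of that intermediate partition using the two standard facts about Littlewood--Richardson tableaux (entries $i$ lie in rows $\ge i$, at most one per column) together with a column-by-column count. The only difference is cosmetic: you take $A=D(\delta)\setminus D(E_i\alpha)$ directly (making the containment automatic), whereas the paper builds $A$ as the part of a horizontal strip $D(\delta)\setminus D(\cut{\kappa})$ lying in rows $\ge i$ and then verifies the containment separately; you also correctly read the statement's $k$ as $i$.
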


\begin{proof}
By Eq. \eqref{3LR}, there exists a partition $\kappa$ such that $c_{\kappa,\tau}^{\alpha}>0$ and $c_{\delta,\sigma}^{\kappa}>0$ since $c_{\delta, \sigma,\tau}^\alpha>0$. In particular $D(\delta) \subset D(\kappa) \subset D(\alpha)$.

Let $S_i=\{(x,y)\,|\,x \geq 1 \text{ and }y \geq i\}$ and let $H=D(\delta)\setminus D(\cut{\kappa})$. Notice that $H$  is an horizontal strip consisting in all boxes of $D(\delta)$ having no box of $D(\kappa)$ above them, see Figure \ref{fig:strip} for an example.
\begin{figure}[h]
\includegraphics{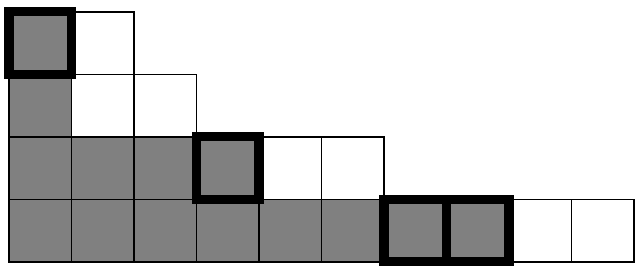}
\caption{The horizontal strip $H$ (boxes with thick edges)  
for $\kappa=(10,6,3,2)$ (white and grey boxes) and $\delta=(8,4,1,1)$ (grey boxes). 
}\label{fig:strip}
\end{figure}

Let $A=S_i \cap H$, notice that this is the horizontal strip contained in $H$ strictly above the $i-1$-st row. We have
\[
|A|=\kappa_i - \width( D(\kappa/\delta) \cap S_i ).
\]
On the other hand, since $c_{\delta,\sigma}^{\kappa}>0$, there exists a Littlewood--Richardson tableau with shape $\kappa/\delta$ and content $\sigma$. In this tableau, there is at most one occurrence of $i$ by column of $\kappa/\delta$, and they are all in row $i$ or higher. Therefore,
\[
\sigma_i \leq \width(D(\kappa/\delta) \cap S_i).
\]
As a consequence,
\[
|A| +\sigma_i \leq \kappa_i 
\]
Since $D(\kappa) \subset D(\alpha)$ we conclude that $|A| +\sigma_i \leq \alpha_i$.

Now by construction of $A$,
\[
D(\delta) \cap S_i \subset \left(D(\cut{\kappa}) \cap S_i\right) \cup A
\]
and clearly $D(\delta) \setminus S_i \subset D(\kappa) \setminus S_i$. Therefore
\[
D(\delta) \subset \left( D(\kappa) \setminus S_i\right) \cup \left( D(\cut{\kappa}) \cap S_i\right) \cup A
\]
Finally, observe that $D(E_i \kappa)=\left( D(\kappa) \setminus S_i\right) \cup \left( D(\cut{\kappa}) \cap S_i\right)$. Therefore, 
\[
D(\delta) \subset D(E_i \kappa) \cup A
\]
Since $D(\kappa) \subset D(\alpha)$ we have $D(E_i \kappa) \subset D(E_i \alpha)$, and thus
\[
D(\delta) \subset D(E_i \alpha) \cup A
\]
\end{proof}

\begin{theorem}\label{prop:otherparts}
Let $\alpha$, $\beta$ and $\gamma$ be partitions such that $\overline{g}_{\alpha,\beta}^{\gamma}>0$ and let $i,j$ and $k$ be a positive integers such that $i+j-1=k$, then we have
\[
\gamma_k \leq |E_i \alpha \cap E_j \beta | +\alpha_i + \beta_j.
\]
\end{theorem}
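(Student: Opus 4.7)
The plan is to exploit the nonnegative decomposition of $\overline{g}_{\alpha,\beta}^{\gamma}$ supplied by Lemma~\ref{prop:Littlewood}, then propagate inequalities through each factor. Since $\overline{g}_{\alpha,\beta}^{\gamma}>0$, that lemma yields partitions $\delta,\epsilon,\zeta,\rho,\sigma,\tau$ for which all four coefficients $g_{\delta,\epsilon}^{\zeta}$, $c_{\delta,\sigma,\tau}^{\alpha}$, $c_{\epsilon,\rho,\tau}^{\beta}$ and $c_{\zeta,\rho,\sigma}^{\gamma}$ are positive. I then invoke the classical row version of the Littlewood--Richardson inequality, ``if $c_{\lambda,\mu}^{\nu}>0$ and $s+t-1=r$, then $\nu_r\le \lambda_s+\mu_t$'', which generalizes the $r=s=t=1$ instance already used in the proof of Theorem~\ref{thm:g and g1}. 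Writing $c_{\zeta,\rho,\sigma}^\gamma=\sum_\varphi c_{\zeta,\rho}^\varphi\, c_{\varphi,\sigma}^\gamma$ via \eqref{3LR}, pick $\varphi$ with both factors positive; applying the row bound first to $c_{\varphi,\sigma}^\gamma$ with $(s,t)=(j,i)$ (note $j+i-1=k$) and then to $c_{\zeta,\rho}^\varphi$ with $(s,t)=(1,j)$ yields
\[
\gamma_k \;\le\; \varphi_j+\sigma_i \;\le\; \zeta_1+\rho_j+\sigma_i.
\]

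It then suffices to show $\zeta_1+\rho_j+\sigma_i \le |E_i\alpha\cap E_j\beta|+\alpha_i+\beta_j$. Proposition~\ref{prop:Dvir} applied to $g_{\delta,\epsilon}^{\zeta}>0$ gives $\zeta_1\le |\delta\cap\epsilon|$. Two applications of Lemma~\ref{lemma:H}, first to $c_{\delta,\sigma,\tau}^{\alpha}$ with parameter $i$ and then to $c_{\epsilon,\rho,\tau}^{\beta}$ with parameter $j$, produce finite sets $A,B\subset \mathbb{N}^2$ satisfying $D(\delta)\subset D(E_i\alpha)\cup A$, $|A|+\sigma_i\le \alpha_i$, $D(\epsilon)\subset D(E_j\beta)\cup B$ and $|B|+\rho_j\le \beta_j$. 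Intersecting the two containments, every cell of $D(\delta)\cap D(\epsilon)$ lies either in $D(E_i\alpha)\cap D(E_j\beta)$ or in $A\cup B$, so $|\delta\cap\epsilon|\le |E_i\alpha\cap E_j\beta|+|A|+|B|$. Assembling the pieces,
\[
\gamma_k \;\le\; \zeta_1+\rho_j+\sigma_i \;\le\; |E_i\alpha\cap E_j\beta|+(|A|+\sigma_i)+(|B|+\rho_j) \;\le\; |E_i\alpha\cap E_j\beta|+\alpha_i+\beta_j,
\]
which is the claim.

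The step most likely to require extra care is the generalized row bound ``$c_{\lambda,\mu}^\nu>0$ and $s+t-1=r$ imply $\nu_r \le \lambda_s+\mu_t$''. Only the $r=s=t=1$ case appears explicitly in the paper, but the general statement is a standard consequence of the Littlewood--Richardson rule: in an LR tableau of shape $\nu/\lambda$ with content $\mu$, column strictness forces the cells of row $s+t-1$ lying strictly to the right of column $\lambda_s$ to carry entries $\ge t$, after which the reverse-lattice-word condition caps the number of such cells by $\mu_t$. Once this standard inequality is in hand, everything else in the argument is careful bookkeeping between the already-established Lemmas~\ref{prop:Littlewood} and \ref{lemma:H} and Proposition~\ref{prop:Dvir}.
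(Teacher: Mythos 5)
Your proposal is correct and follows essentially the same route as the paper: the nonnegative decomposition of Lemma~\ref{prop:Littlewood}, the inequality $\nu_{s+t-1}\le\lambda_s+\mu_t$ for nonzero Littlewood--Richardson coefficients (which the paper cites as Weyl's inequalities via Fulton, applied to a two-step factorization of $c_{\zeta,\rho,\sigma}^{\gamma}$ that differs from yours only in the bracketing), Proposition~\ref{prop:Dvir} for $\zeta_1$, and two applications of Lemma~\ref{lemma:H} combined exactly as you do. The only superficial difference is that you sketch a tableau-theoretic proof of the row inequality where the paper simply cites it.
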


\begin{proof}
Let $i$ and $j$ such that $i+j-1=k$.

By Lemma \ref{prop:Littlewood}, there exist partitions $\delta$, $\epsilon$, $\zeta$, $\rho$, $\sigma$, $\tau$ such that all four coefficients $g_{\delta,\epsilon}^{\zeta}$, $c_{\delta,\sigma,\tau}^{\alpha}$, $c_{\epsilon,\rho,\tau}^{\beta}$, $c_{\zeta,\rho,\sigma}^{\gamma}$ are positive. 

By Eq. \eqref{3LR} and since $c_{\zeta,\rho,\sigma}^{\gamma}>0$, there exists a partition $\phi$ such that $c_{\zeta,\phi}^{\gamma}>0$ and $c_{\rho,\sigma}^{\phi}>0$. Weyl's inequalities for eigenvalues of Hermitian matrices (\cite{Weyl} or Eq. (2) in \cite{Fulton}) imply that whenever a Littlewood--Richardson coefficient $c_{\mu,\nu}^{\lambda}$ is non--zero there is $\lambda_{p+q-1} \leq \mu_p + \nu_q$ for all $p$, $q$ (see \cite{Fulton}). Apply this to $c_{\zeta,\phi}^{\gamma}$ with $p=1$, $q=k$: we obtain $\gamma_k \leq \zeta_1+\phi_k$. Apply Weyl's inequalities to $c_{\rho,\sigma}^{\phi}$ with $p=j$, $q=i$: we obtain $\phi_k \leq \rho_j+\sigma_i$. It follows that $\gamma_k \leq \zeta_1+\sigma_i+\rho_j$.

Since $g_{\delta,\epsilon}^{\zeta}>0$, we have $\zeta_1 \leq |\delta \cap \epsilon|$ by Proposition \ref{prop:Dvir}, then
\begin{equation}\label{eq:11}
\gamma_k \leq |\delta \cap \epsilon|+\rho_j+\sigma_i
\end{equation}
Since $c_{\delta,\sigma,\tau}^{\alpha}>0$, Lemma \ref{lemma:H} implies that there exists a set $A_1$ such that
\[
D(\delta) \subset D(E_i \alpha) \cup A_1 \text{ and } |A_1| + \sigma_i \leq \alpha_i.
\]
Similarly for $c_{\epsilon,\rho,\tau}^{\beta}>0$,  Lemma \ref{lemma:H} implies that there exists a set $A_2$ such that
\[
D(\epsilon) \subset D(E_j \beta) \cup A_2 \text{ and } |A_2| + \rho_j \leq \beta_j.
\]
Therefore,
\[
D(\delta \cap \epsilon) \subset D(E_i \alpha \cap E_j \beta) \cup A_1 \cup A_2.
\]
As a consequence,
\[
|\delta \cap \epsilon| \leq |E_i \alpha \cap E_j \beta| + |A_1| + |A_2|.
\]
This together with \eqref{eq:11} yields
\[
\gamma_k \leq |E_i \alpha \cap E_j \beta| + |A_1| + \sigma_i+|A_2|+\rho_j.
\]
Remembering that $|A_1| + \sigma_i \leq \alpha_i$ and $|A_2| + \rho_j \leq \beta_j$, we get the claimed inequality.
\end{proof}

\begin{proof}[Proof of Theorem \ref{thm:gamma1}]
The bound holds by Theorem \ref{prop:otherparts} since $|E_1\alpha\cap E_1\beta|+\alpha_1+\beta_1=|\alpha\cap \beta|+\max(\alpha_1,\beta_1)$.
 Let us now show it is reached. Choose $\delta=\epsilon=\cut{\alpha} \cap \cut{\beta}$ and for $\zeta$ a partition such that $g_{\delta,\epsilon}^{\zeta}>0$ and $\zeta_1=|\delta \cap \epsilon|=|\cut{\alpha} \cap \cut{\beta}|$, such a partition exists by Proposition \ref{prop:Dvir}. Choose $\tau$ to be the empty partition. 

Choose $\sigma$ as follows: first, $\sigma_1=\alpha_1$. This will ensure that $c_{\delta,\sigma,\tau}^{\alpha}=c_{\delta,\sigma}^{\alpha}=c_{\delta,\cut{\sigma}}^{\cut{\alpha}}$. 
The Littlewood--Richardson coefficients $c_{\delta,\kappa}^{\cut{\alpha}}$ are the coefficients in the expansion of the non--zero skew--Schur function $s_{\cut{\alpha}/\delta}$ in the Schur basis, hence one of them has to be non--zero. Choose for $\cut{\sigma}$ one such partition $\kappa$ (observe that $D(\kappa) \subset D(\cut{\alpha})$, therefore $\kappa_1 \leq \alpha_2 \leq \alpha_1=\sigma_1$).  Define similarly $\rho$. Finally set $\gamma=\zeta+\sigma+\rho$. 
\end{proof}

\begin{theorem}[The maximum and minimum weight of partitions indexing nonzero reduced
    Kronecker coefficients]\label{minmaxgamma}
Let $\alpha$ and $\beta$ be partitions. We have:
\begin{align*}
\maxsupp{|\gamma|} &= |\alpha|+|\beta|,\\
\minsupp{|\gamma|} &= \max(|\alpha|,|\beta|)-|\alpha\cap\beta|.
\end{align*}
\end{theorem}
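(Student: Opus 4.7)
The maximum is essentially immediate: the upper bound $|\gamma| \leq |\alpha|+|\beta|$ is one of Murnaghan's inequalities, while the Murnaghan--Littlewood Theorem gives $\overline{g}_{\alpha,\beta}^{\alpha+\beta} = c_{\alpha,\beta}^{\alpha+\beta} = 1$, so the bound is attained at $\gamma = \alpha+\beta$.

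For the minimum, my plan is to use Lemma~\ref{prop:Littlewood} twice: once to extract the lower bound and once to exhibit a witness. For the lower bound, assume $\overline{g}_{\alpha,\beta}^{\gamma} > 0$ and pick partitions $\delta,\epsilon,\zeta,\rho,\sigma,\tau$ producing a positive summand in \eqref{trick}. The positivity of $c_{\delta,\sigma,\tau}^{\alpha}$ and $c_{\epsilon,\rho,\tau}^{\beta}$, together with \eqref{3LR} and the symmetry of Littlewood--Richardson coefficients, forces $\tau \subseteq \alpha$ and $\tau \subseteq \beta$, hence $|\tau|\leq|\alpha\cap\beta|$. Combining the weight identities $|\delta|+|\sigma|+|\tau|=|\alpha|$, $|\epsilon|+|\rho|+|\tau|=|\beta|$, $|\delta|=|\epsilon|=|\zeta|$ and $|\gamma|=|\zeta|+|\rho|+|\sigma|$ yields $|\gamma|=|\alpha|-|\tau|+|\rho|$ and $|\gamma|=|\beta|-|\tau|+|\sigma|$; since $|\rho|,|\sigma|\geq 0$, this gives $|\gamma|\geq\max(|\alpha|,|\beta|)-|\alpha\cap\beta|$.

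For a matching witness, assume without loss of generality $|\alpha|\geq|\beta|$. Set $\tau=\alpha\cap\beta$ and $\rho=\emptyset$. Pick any $\epsilon$ in the Schur expansion of $s_{\beta/\tau}$ and any $\phi$ in the Schur expansion of $s_{\alpha/\tau}$, noting $|\phi|=|\alpha|-|\tau|\geq|\beta|-|\tau|=|\epsilon|$. Choose a sub-partition $\delta\subseteq\phi$ of weight $|\beta|-|\tau|$, obtained by stripping off corner cells of $\phi$ one at a time; this makes $|\delta|=|\epsilon|$ automatic. Let $\sigma$ be any partition in the Schur expansion of $s_{\phi/\delta}$; then $c_{\delta,\sigma}^{\phi}>0$, and combining with $c_{\phi,\tau}^{\alpha}>0$ via \eqref{3LR} yields $c_{\delta,\sigma,\tau}^{\alpha}>0$ with $|\sigma|=|\alpha|-|\beta|$. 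Pick any $\zeta$ with $g_{\delta,\epsilon}^{\zeta}>0$ (at least one exists from the decomposition of $V_\delta\otimes V_\epsilon$) and set $\gamma=\zeta+\sigma$, which makes $c_{\zeta,\rho,\sigma}^{\gamma}=c_{\zeta,\sigma}^{\gamma}\geq 1$. The resulting summand of \eqref{trick} is positive and $|\gamma|=|\zeta|+|\sigma|=(|\beta|-|\tau|)+(|\alpha|-|\beta|)=|\alpha|-|\alpha\cap\beta|$.

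The main obstacle is coherently choosing the six auxiliary partitions so that all four factors of \eqref{trick} are simultaneously positive. The key conceptual point is that $\delta$ and $\epsilon$ only need to share a weight (not a shape), so $\delta$ is selected inside $\phi$ on the $\alpha$-side independently of $\epsilon$ on the $\beta$-side; the tempting choice $\delta=\epsilon$ overconstrains the problem and already fails in extremal cases such as $\alpha=(k),\beta=(1^k)$.
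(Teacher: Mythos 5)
Your proof is correct, and while the maximum is handled exactly as in the paper (Murnaghan's inequalities for the upper bound, the Murnaghan--Littlewood theorem at $\gamma=\alpha+\beta$ for sharpness), your treatment of the minimum takes a genuinely different route. The paper transfers the problem to ordinary Kronecker coefficients: it picks $n$ with $g_{\alpha[n],\beta[n]}^{\gamma[n]}=\overline{g}_{\alpha,\beta}^{\gamma}$, applies Proposition \ref{prop:Dvir} in the form $n-|\gamma|\leq|\alpha[n]\cap\beta[n]|$, computes $|\alpha[n]\cap\beta[n]|=n-\max(|\alpha|,|\beta|)+|\alpha\cap\beta|$, and gets sharpness from the same proposition. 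You instead stay entirely inside Littlewood's decomposition (Lemma \ref{prop:Littlewood}): the lower bound follows from the weight identities $|\zeta|=|\delta|=|\epsilon|$, $|\delta|+|\sigma|+|\tau|=|\alpha|$, $|\epsilon|+|\rho|+|\tau|=|\beta|$, $|\gamma|=|\zeta|+|\rho|+|\sigma|$ together with $\tau\subseteq\alpha\cap\beta$, and sharpness comes from an explicit choice of the six auxiliary partitions. Both arguments are sound. The paper's is shorter but leans on the Klemm--Dvir--Clausen--Meier result and, implicitly, on monotonicity to pass positivity of $g_{\alpha[n],\beta[n]}^{\gamma[n]}$ back to $\overline{g}_{\alpha,\beta}^{\gamma}$; yours is self-contained given Lemma \ref{prop:Littlewood}, produces an explicit extremal $\gamma=\zeta+\sigma$, and closely mirrors the construction the paper itself uses to prove sharpness in Theorem \ref{thm:gamma1}. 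Your closing observation that $\delta$ and $\epsilon$ need only share a weight, not a shape, is exactly the right point: insisting on $\delta=\epsilon$ would already fail for $\alpha=(k)$, $\beta=(1^k)$.
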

\proof
From Murnaghan's inequalities we know that 
$|\gamma|\le|\alpha|+|\beta|$ for all $\gamma \in \Supp(\alpha,\beta)$. Moreover, this maximum is achieved, take $\gamma=\alpha+\beta$, then $c_{\alpha,\beta}^{\alpha+\beta}>0$ and finally $\overline{g}_{\alpha,\beta}^{\alpha+\beta}=c_{\alpha,\beta}^{\alpha+\beta}$ by the theorem of Littlewood and Murnaghan.

To show the second bound, assume that $\overline{g}_{\alpha,\beta}^{\gamma}>0$. There exists $n$ such that 
$g_{\alpha[n],\beta[n]}^{\gamma[n]}=\overline{g}_{\alpha,\beta}^{\gamma}$. By Proposition \ref{prop:Dvir} we have that 
$
n-|\gamma|  \leq |\alpha[n]\cap \beta[n]|.
$
Hence,  
\[
|\alpha[n]\cap \beta[n]| = \min(n-|\alpha|,n-|\beta|)+|\alpha \cap \beta|
                         = n-\max(|\alpha|,|\beta|)+|\alpha \cap \beta|.
\]
We conclude that $|\gamma| \geq \max(|\alpha|, |\beta|) -|\alpha\cap\beta|$.  

Again by Proposition \ref{prop:Dvir} we know that there is a partition $\gamma$ for which $n-|\gamma| = |\alpha[n]\cap\beta[n]|$, hence this bound is sharp. 
\endproof

\begin{corollary}
\label{thm:globalboundlower}
Let $\alpha$ and $\beta$ be partitions and $i$ and $j$ positive integers such that $k=i+j-1$. Then
\[
\maxsupp{\gamma_k}
\le 
\min\left( |E_i \alpha \cap E_j \beta | +\alpha_i + \beta_j,\left[\frac{ |\alpha|+|\beta|}{k}\right]  \right)
\]

\end{corollary}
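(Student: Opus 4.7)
The plan is to combine two bounds, both already essentially in hand. The first component of the minimum, $|E_i \alpha \cap E_j \beta| + \alpha_i + \beta_j$, is exactly the content of Theorem \ref{prop:otherparts}: whenever $\overline{g}_{\alpha,\beta}^{\gamma} > 0$ and $k = i+j-1$, that theorem gives $\gamma_k \le |E_i \alpha \cap E_j \beta| + \alpha_i + \beta_j$. So the only new work is the second bound, $\gamma_k \le \lfloor (|\alpha|+|\beta|)/k \rfloor$.

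For the second bound I would proceed by a short pigeonhole--style argument. Suppose $\overline{g}_{\alpha,\beta}^{\gamma} > 0$. By Theorem \ref{minmaxgamma} we have $|\gamma| \le |\alpha| + |\beta|$. Since $\gamma$ is a partition, its parts are weakly decreasing, so $\gamma_1 \ge \gamma_2 \ge \cdots \ge \gamma_k$, which yields the inequality
\[
k\,\gamma_k \;\le\; \gamma_1 + \gamma_2 + \cdots + \gamma_k \;\le\; |\gamma| \;\le\; |\alpha| + |\beta|.
\]
Dividing by $k$ gives $\gamma_k \le (|\alpha|+|\beta|)/k$, and since $\gamma_k$ is a nonnegative integer, we may round down to obtain $\gamma_k \le \lfloor (|\alpha|+|\beta|)/k \rfloor$.

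Taking the minimum of the two bounds over all partitions $\gamma \in \Supp(\alpha,\beta)$ yields the claimed inequality. There is no real obstacle here: both ingredients are already established in the preceding sections, and the corollary is simply their combination. The only delicate point to mention is that $\gamma_k$ being an integer is what lets us replace the real quotient $(|\alpha|+|\beta|)/k$ by its integer part.
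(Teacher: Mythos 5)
Your proof is correct and matches the paper's approach: the paper likewise derives the first component of the minimum directly from Theorem \ref{prop:otherparts} and the second from the bound $|\gamma|\le|\alpha|+|\beta|$ of Theorem \ref{minmaxgamma} combined with the weak decrease of the parts of $\gamma$. You have simply spelled out the pigeonhole step that the paper leaves implicit.
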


\begin{proof}  This is a straightforward consequence of Theorems \ref{prop:otherparts} and   \ref{minmaxgamma}.
\end{proof}

\begin{exa} \label{ex:otherparts}
Let $\alpha=(2)$ and $\beta=(4,3,2)$, then the first row of the table are the nonzero values of $\gamma_k$ and the second row are the upper bounds given by Corollary \ref{thm:globalboundlower}.
\begin{center}
\begin{tabular}{l|c|c|c|c|c}
$k$ & 1 & 2 & 3 &4 &5 \\ \hline
max values for $\gamma_k$ & 6&4&3&2&1\\ \hline
bound for $\gamma_k$ &6&5&3&2&2
\end{tabular}
\end{center}
In the case that $\alpha=(3,1)$ and $\beta=(2,2)$ we get
\begin{center}
\begin{tabular}{l|c|c|c|c|c|c}
$k$ & 1 & 2 & 3 &4 &5& 6 \\ \hline
max values for $\gamma_k$ & 6&3&2&1&1&1\\ \hline
bound for $\gamma_k$ & 6 & 4 & 2 & 2&1 &1
\end{tabular}
\end{center}
\end{exa}

\section{Stability : The Kronecker coefficients}\label{sec:coeff}


In this last section we consider linear upper bounds for $\stab(\alpha,\beta,\gamma)$. Previously known bounds, due to Brion   \cite{Brion:Foulkes} and Vallejo \cite{Vallejo} respectively, are
\begin{align*}
M_B(\alpha,\beta;\gamma)&=|\alpha|+|\beta|+\gamma_1,\\
M_V(\alpha,\beta; \gamma)&=|\gamma|+\left\lbrace
\begin{array}{ll}
\max \{|\alpha|+\alpha_1-1,|\beta|+\beta_1-1,|\gamma|\} & \text{ if } \alpha \neq \beta \\
\max \{|\alpha|+\alpha_1,|\gamma|\} & \text{ if } \alpha=\beta
\end{array}
\right.
\end{align*}

We introduce Lemma \ref{lemma:Mf} that produces linear upper bounds for
$\stab(\alpha, \beta, \gamma)$ from linear inequalities fulfilled by those $(\alpha,\beta,\gamma)$ 
 for which $\overline{g}_{\alpha,\beta}^{\gamma} >0.$ 
Applying this lemma to different bounds derived in Section 3 and Section 4, we obtain two new upper
 bounds for $\stab(\alpha, \beta, \gamma)$, and recover Brion's bound $M_B$.

\begin{lemma}\label{lemma:Mf}
Let $f$ be a function on triples of partitions such that for all $i$,
\[
f(\alpha,\beta,\bar{\gamma}) \geq f(\alpha,\beta,\gamma^{\dagger i}).
\]
Set $\mathcal{M}_f(\alpha,\beta,\gamma)=|\gamma|+f(\alpha,\beta,\bar{\gamma})$ and assume also that 
whenever $\overline{g}_{\alpha,\beta}^{\gamma} >0$,
\begin{equation}\label{eq:condition}
\mathcal{M}_f(\alpha,\beta,\gamma) \geq \max\left(|\alpha|+\alpha_1,|\beta|+\beta_1,|\gamma|+\gamma_1\right).
\end{equation}
Then whenever $\overline{g}_{\alpha,\beta}^{\gamma}>0$,
\[
\stab(\alpha,\beta,\gamma) \leq \mathcal{M}_f(\alpha,\beta,\gamma).
\]
\end{lemma}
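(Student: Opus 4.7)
The plan is to invoke the inversion formula from Theorem \ref{theorem:g in gbar}, which writes $g_{\alpha[n],\beta[n]}^{\gamma[n]}$ as an alternating sum of reduced Kronecker coefficients indexed by the partitions $\gamma[n]^{\dagger i}$, and then show that for $n \geq \mathcal{M}_f(\alpha,\beta,\gamma)$ every summand except the one for $i=1$ vanishes. Throughout, I assume $\overline{g}_{\alpha,\beta}^{\gamma}>0$, since the conclusion concerns exactly this case.

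First I verify that $\alpha[n]$, $\beta[n]$, and $\gamma[n]$ are genuine partitions for $n \geq \mathcal{M}_f(\alpha,\beta,\gamma)$: this is precisely what the hypothesis $\mathcal{M}_f(\alpha,\beta,\gamma) \geq \max(|\alpha|+\alpha_1,|\beta|+\beta_1,|\gamma|+\gamma_1)$ provides. Theorem \ref{theorem:g in gbar} then applies and yields
\[
g_{\alpha[n],\beta[n]}^{\gamma[n]} = \sum_{i=1}^{\ell(\alpha[n])\,\ell(\beta[n])} (-1)^{i+1}\, \overline{g}_{\alpha,\beta}^{\gamma[n]^{\dagger i}}.
\]
Since $\gamma[n]^{\dagger 1} = \gamma$ (the first part of $\gamma[n]$ is just erased), the term for $i=1$ equals $\overline{g}_{\alpha,\beta}^{\gamma}$, and I am reduced to showing that every term with $i\geq 2$ is zero.

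A direct unfolding of the definition of the $\dagger$ operation shows that for $i \geq 2$,
\[
\gamma[n]^{\dagger i} = \bigl(n-|\gamma|+1,\; \gamma_1+1,\; \ldots,\; \gamma_{i-2}+1,\; \gamma_i,\; \gamma_{i+1},\ldots\bigr),
\]
so that its first part is $n-|\gamma|+1$ and its tail $\overline{\gamma[n]^{\dagger i}}$ coincides with $\gamma^{\dagger (i-1)}$. Set $\delta=\gamma[n]^{\dagger i}$ and suppose, for contradiction, that $\overline{g}_{\alpha,\beta}^{\delta}>0$. Applying \eqref{eq:condition} to $\delta$ in place of $\gamma$ gives $f(\alpha,\beta,\bar{\delta})\geq \delta_1$, i.e.\ $f(\alpha,\beta,\gamma^{\dagger (i-1)}) \geq n-|\gamma|+1$. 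The monotonicity hypothesis on $f$ then yields
\[
n-|\gamma|+1 \;\leq\; f(\alpha,\beta,\gamma^{\dagger (i-1)}) \;\leq\; f(\alpha,\beta,\bar{\gamma}),
\]
which rearranges to $n \leq \mathcal{M}_f(\alpha,\beta,\gamma)-1$, contradicting $n\geq \mathcal{M}_f(\alpha,\beta,\gamma)$. Hence every term with $i\geq 2$ vanishes, the sum collapses to $\overline{g}_{\alpha,\beta}^{\gamma}$, and $\stab(\alpha,\beta,\gamma)\leq \mathcal{M}_f(\alpha,\beta,\gamma)$.

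The only slightly delicate step is the combinatorial identity $\overline{\gamma[n]^{\dagger i}} = \gamma^{\dagger (i-1)}$ for $i\geq 2$: this is exactly what turns the monotonicity hypothesis, stated for the operation $\gamma \mapsto \gamma^{\dagger i}$, into the precise condition needed to kill the $i\geq 2$ terms of the inversion formula. Once this identification is made, the rest is bookkeeping, and one recovers Brion's bound as well as the new bounds $N_1$ and $N_2$ by plugging in the linear forms bounding $\gamma_1$ obtained in Sections \ref{sec:product} and \ref{sec:bounds}.
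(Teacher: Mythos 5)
Your proof is correct and follows essentially the same route as the paper's: both invoke the inversion formula of Theorem \ref{theorem:g in gbar} at $(\alpha[n],\beta[n],\gamma[n])$, identify the $i=1$ term as $\overline{g}_{\alpha,\beta}^{\gamma}$, observe that $\gamma[n]^{\dagger i}$ has first part $n-|\gamma|+1$ and tail $\gamma^{\dagger(i-1)}$, and then kill the remaining terms by combining the monotonicity of $f$ with condition \eqref{eq:condition}. The only cosmetic difference is that you argue by contradiction where the paper argues by contrapositive.
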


\begin{proof}
Let $\alpha$, $\beta$ and $\gamma$ be partitions such that $\overline{g}_{\alpha,\beta}^{\gamma}>0$. Let $n \geq \mathcal{M}_f(\alpha,\beta,\gamma)$.
By Lemma \ref{theorem:g in gbar},
\begin{equation}\label{eq:g in gbar with gamma}
g_{\alpha[n]\beta[n]}^{\gamma[n]} = 
\overline{g}_{\alpha,\beta}^{\gamma}+
\sum_{i=1}^{N}(-1)^{i}\,
\overline{g}_{\alpha,\beta}^{(n-|\gamma|+1,\gamma^{\dagger i})}
\end{equation}
for some $N$. Since $n \geq \mathcal{M}_f(\alpha,\beta,\gamma)=|\gamma|+f(\alpha,\beta,\bar{\gamma})$, we have 
$n-|\gamma|+1 > f(\alpha,\beta,\bar{\gamma})$. Thus $n-|\gamma|+1 > f(\alpha,\beta,\gamma^{\dagger i})$ for all $i$. As a consequence, none of the partitions $\tau=(n-|\gamma|+1,\gamma^{\dagger i})$ fulfills $\mathcal{M}_f(\alpha,\beta,\tau) \geq |\tau|+\tau_1$. Indeed, for such a partition, $|\tau|+\tau_1=|\tau|+(n-|\gamma|+1)$ and $\mathcal{M}_f(\alpha,\beta,\tau)=|\tau|+f(\alpha,\beta,\gamma^{\dagger i})$. We get that all terms $\overline{g}_{\alpha,\beta}^{(n-|\gamma|+1,\gamma^{\dagger i})}$ in \eqref{eq:g in gbar with gamma} are zero. Therefore $g_{\alpha[n]\beta[n]}^{\gamma[n]}$ is equal to its stable value $\overline{g}_{\alpha,\beta}^{\gamma}$. We conclude that $\mathcal{M}_f \geq \stab(\alpha,\beta,\gamma)$.
\end{proof}

Three functions $f$ such that \eqref{eq:condition} hold have already appeared in this paper. Each one gives a bound for $\stab(\alpha,\beta,\gamma)$.
\begin{enumerate}
\item Murnaghan's triangle inequalities (see Murnaghan's Theorem) and Theorem \ref{thm:gamma1} show that \eqref{eq:condition} holds for $f(\alpha,\beta,\tau)=|\alpha|+|\beta|-|\tau|$. We recover Brion's bound $M_B$.
\item Theorem \ref{thm:gamma1} and Murnaghan's triangle inequalities also imply that \eqref{eq:condition} holds for $f(\alpha,\beta,\tau)=|\bar{\alpha} \cap \bar{\beta}|+\alpha_1+\beta_1$.
the corresponding bound $\mathcal{M}_f$ is $M_1(\alpha,\beta,\gamma)=|\gamma|+|\bar{\alpha}\cap \bar{\beta}|+\alpha_1+\beta_1$. Hence, by Lemma \ref{lemma:Mf} and the symmetry of the Kronecker coefficients we obtain the proof of Theorem \ref{theorem:N1}.
\item Theorem \ref{thm:global} shows that \eqref{eq:condition} holds for $f(\alpha,\beta,\tau)=1/2\,(|\alpha|+|\beta|+\alpha_1+\beta_1-|\tau|)$, which corresponds to $\mathcal{M}_f=M_2=\frac{1}{2}(|\alpha|+|\beta|+|\gamma|+\alpha_1+\beta_1+\gamma_1)$. The bound $N_2=\left[ M_2 \right]$ of Theorem \ref{theorem:N2} follows. 
\end{enumerate}
Set $N_1(\alpha,\beta,\gamma)=\min\left\{
M_1(\alpha,\beta;\gamma), M_1(\alpha,\gamma;\beta), M_1(\beta,\gamma;\alpha) \right\}$ and define similarly $N_B$ and $N_V$ from $M_B$ and $M_V$. These are also upper bounds for $\stab(\alpha,\beta,\gamma)$. In the following proposition we show that the bound $N_1$ improves
both Vallejo's $N_V$ and Brion's bound, $N_B$. 
%

\begin{proposition} \label{prop:comparison}
Let $\alpha$, $\beta$, $\gamma$ be partitions, then $N_1(\alpha,\beta,\gamma) \leq N_B(\alpha,\gamma,\beta)$ and $N_1(\alpha,\beta,\gamma) \leq N_V(\alpha,\beta,\gamma)$.
\end{proposition}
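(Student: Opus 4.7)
All three bounds $N_1$, $N_B$, $N_V$ are symmetric functions of the triple $(\alpha,\beta,\gamma)$, each being a minimum over the three possible choices of which partition plays the distinguished role in $M_1$, $M_B$ or $M_V$. In particular $N_B(\alpha,\gamma,\beta)=N_B(\alpha,\beta,\gamma)$, so the statement is simply $N_1\leq N_B$ and $N_1\leq N_V$. My plan is to prove, for an arbitrary labeling, the two inequalities $N_1(\alpha,\beta,\gamma)\leq M_B(\alpha,\beta;\gamma)$ and $N_1(\alpha,\beta,\gamma)\leq M_V(\alpha,\beta;\gamma)$; taking the minimum over the three labelings of the right-hand side then gives the proposition.

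The Brion half is the shorter one, and the trick is to bound $N_1$ by a differently labeled version of $M_1$ rather than by $M_1(\alpha,\beta;\gamma)$ itself. Using only the inclusion $\bar\alpha\cap\bar\gamma\subseteq\bar\alpha$ one obtains
\[
M_1(\alpha,\gamma;\beta)=|\beta|+|\bar\alpha\cap\bar\gamma|+\alpha_1+\gamma_1\leq |\beta|+|\bar\alpha|+\alpha_1+\gamma_1=|\alpha|+|\beta|+\gamma_1=M_B(\alpha,\beta;\gamma),
\]
and $N_1(\alpha,\beta,\gamma)\leq M_1(\alpha,\gamma;\beta)$ holds by definition of $N_1$ as a minimum.

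For Vallejo, I would instead prove the stronger statement $M_1(\alpha,\beta;\gamma)\leq M_V(\alpha,\beta;\gamma)$ with matching labelings. The case $\alpha=\beta$ is immediate, since $M_1(\alpha,\alpha;\gamma)=|\gamma|+|\alpha|+\alpha_1$ and this quantity already appears inside the max defining $M_V(\alpha,\alpha;\gamma)$. When $\alpha\neq\beta$ the desired inequality reduces to the arithmetic claim
\[
|\bar\alpha\cap\bar\beta|+\alpha_1+\beta_1\leq\max\bigl\{|\alpha|+\alpha_1-1,\;|\beta|+\beta_1-1\bigr\}.
\]
If $\alpha_1>\beta_1$ this follows directly from $|\bar\alpha\cap\bar\beta|\leq|\bar\alpha|=|\alpha|-\alpha_1$ combined with $\beta_1\leq\alpha_1-1$; the subcase $\alpha_1<\beta_1$ is symmetric. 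The hard part, and the main obstacle of the proof, is the boundary case $\alpha_1=\beta_1$. There the hypothesis $\alpha\neq\beta$ forces $\bar\alpha\neq\bar\beta$, and I would exploit the observation that $|\bar\alpha\cap\bar\beta|=|\bar\alpha|$ holds if and only if $\bar\alpha\subseteq\bar\beta$ as Young diagrams: either the containment is strict, in which case $|\alpha|<|\beta|$ supplies the needed $-1$ through the second term of the max, or else no containment holds at all, in which case the strict inequality $|\bar\alpha\cap\bar\beta|\leq|\bar\alpha|-1$ supplies it through the first term. Tracking this dichotomy of Young-diagram inclusions is what distinguishes the Vallejo case analysis from the clean Brion argument.
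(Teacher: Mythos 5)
Your proof is correct and follows essentially the same route as the paper's: the Brion half is the same one-line estimate $|\bar\alpha\cap\bar\beta|+\alpha_1\leq|\alpha|$ applied to a relabeled $M_1$, and the Vallejo half is the same elementary reduction to $M_1(\alpha,\beta;\gamma)\leq M_V(\alpha,\beta;\gamma)$ settled by a case analysis on equality versus (non-)containment. The only difference is organizational: the paper assumes $\alpha_1\geq\beta_1$ and splits on $\alpha=\beta$, $\alpha\subsetneq\beta$, $\alpha\not\subset\beta$, whereas you split first on comparing $\alpha_1$ with $\beta_1$ and then on containment of $\bar\alpha$ in $\bar\beta$; both yield the same bounds.
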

\begin{proof} 
For all partitions $\alpha$, $\beta$, $\gamma$, we have
\[
M_1(\alpha,\beta;\gamma)=|\gamma|+|\bar{\alpha}\cap \bar{\beta}|+\alpha_1+\beta_1 \leq
|\gamma|+|\alpha|+\beta_1=M_B(\alpha,\gamma;\beta),
\]
since $|\bar{\alpha}\cap
\bar{\beta}|+\alpha_1 \leq |\bar{\alpha}|+\alpha_1 =|\alpha|$. This is enough to conclude that
 $N_1(\alpha,\beta,\gamma) \leq N_B(\alpha,\beta,\gamma)$.

We now prove that $N_1(\alpha,\beta,\gamma) \leq N_V(\alpha,\beta,\gamma)$. It is enough to prove that for all partitions $\alpha$, $\beta$, $\gamma$ we have $M_1(\alpha,\beta;\gamma) \leq M_V(\alpha,\beta;\gamma)$. By symmetry of both bounds with respect to $\alpha$ and $\beta$, we can assume without loss of generality that $\alpha_1 \geq \beta_1$. We consider three cases: $\alpha = \beta$; $\alpha \subsetneq \beta$; $\alpha \not \subset \beta$. We show that in the first case $|\bar{\alpha}\cap \bar{\beta}|+\alpha_1+\beta_1 \leq |\alpha|+\alpha_1$ and that in the other two cases $|\bar{\alpha}\cap \bar{\beta}|+\alpha_1+\beta_1 \leq \max(|\alpha|+\alpha_1-1,|\beta|+\beta_1-1)$.

Consider the case $\alpha = \beta$. Then $|\bar{\alpha}\cap \bar{\beta}|+\alpha_1+\beta_1=|\alpha|+\alpha_1$. 

Consider now the case $\alpha \subsetneq \beta$. Then $|\bar{\alpha}\cap \bar{\beta}|+\alpha_1=|\alpha|\leq |\beta|-1$. Therefore $|\bar{\alpha}\cap \bar{\beta}|+\alpha_1+\beta_1 \leq |\beta|+\beta_1-1$.

Consider last the case when $\alpha \not \subset \beta$. There is $|\bar{\alpha}\cap \bar{\beta}|+\beta_1=|\alpha\cap \beta| \leq |\alpha|-1$. Therefore $|\bar{\alpha}\cap \bar{\beta}|+\alpha_1+\beta_1 \leq |\alpha|+\alpha_1-1$.
\end{proof}

Now that we have shown that $N_1$ improves the bounds $N_B$ and $N_V$. In the following two examples we now compare $N_2$  to $N_B$ and $N_V$. 

\begin{exa}[Comparison of $N_2$ to $N_B$]
Let $\alpha = (2,1)$ and $\beta=(3,1)$, if $\gamma =(3,1)$, then $N_B=10$ is greater than $N_2=9$ and if $\gamma=(3,2,2)$ then $N_B=10$ and $N_2=11$.  This shows that neither one is better than the other. 
\end{exa}

\begin{exa}[Comparison of $N_2$ to $N_V$]
Let $\alpha=(2,1)$, $\beta=(3,1)$ and $\gamma=(3,2,2)$, then $N_2=11$ and $N_V=12$, hence $N_2<N_V$.   On the other hand if $\alpha=(3,2)$ and $\beta=(3,1,1)$ and $\gamma=(6)$, then $N_V=13$ and $N_2=14$ and in this case, $N_V<N_2$. This shows that neither $N_V$ nor $N_2$ is better than the other.  Notice that the last example can be generalized as follows.  If $|\alpha|=|\beta|$ with $\alpha_1=\beta_1$ and $\gamma=(\gamma_1)$, then $N_V\leq N_2$.  
\end{exa}

We conclude this section applying our bounds to some interesting examples of
Kronecker coefficients appearing in the literature.

\begin{exa}[The Kronecker coefficients indexed by three hooks]\label{ex:three hooks} Our
  first 
example looks at the elegant situation where the three indexing partitions  are hooks.
 Note that after deleting the first part of a hook we always obtain a one
column shape. Let $\alpha=(1^e)$, $\beta=(1^f)$ and $\gamma=(1^d)$ be the
 reduced partitions, with $d$, $e$ and $f$ positive.
In Theorem 3 of \cite{Rosas:2001},  it was shown
that Murnaghan's inequalities describe the stable value of the Kronecker
 coefficient ${g}_{\alpha[n],\beta[n]}^{\gamma[n]}$,
\[
\overline{g}_{\alpha,\beta}^{\gamma}= (( e \le d+f)) (( d \le e+f)) (( f
\le e+d))
\]
where $((P))$ equals $1$ if the proposition is true, and $0$ if not.

Moreover,
$\stab(\alpha,\beta,\gamma) $ was actually computed  in the proof of Theorem 3 \cite {Rosas:2001}. It was shown that the
 Kronecker coefficient equals $1$ if and only if 
 Murnaghan's inequalities hold, as well as the additional inequality 
 $e+f \le d+2(n-d)-2$. This last inequality says that:
\[
\stab(\alpha,\beta,\gamma) =  \Big[ \frac{d+e+f+3}{2} \Big]=N_2(\alpha,\beta,\gamma)
\]
To summarize, for triples of hooks, Murnaghan's inequalities govern
the value of the reduced Kronecker coefficients, and $N_2$ is a sharp
bound. On the other hand, the bounds provided by $N_1$, $N_B$, and
$N_V$ are not in general sharp.
 \flushright$\Box$

\end{exa}

\begin{exa}[The Kronecker coefficients indexed by two two-row shapes]
\label{ex:two two-row} After deleting the first part of a two-row
  partition we obtain a partition of length $1$.
Let $\alpha$ and $\beta$ be one-row partitions. We have:
\begin{align*}
N_1(\alpha,\beta,\gamma)&=\alpha_1+\beta_1+\gamma_1\\
N_2(\alpha,\beta,\gamma)&=\alpha_1+\beta_1+\gamma_1+\left[ \frac{\gamma_2+\gamma_3}{2}\right]
\end{align*}
It follows from \cite{Briand:Orellana:Rosas:Chamber} that when $\overline{g}_{\alpha,\beta}^{\gamma}>0$,
\[
\stab(\alpha,\beta,\gamma)=\gamma_1-\gamma_3+\alpha_1+\beta_1.
\]
 Neither $N_1$ nor $N_2$ are sharp bounds. Indeed,  for $\overline{g}_{\alpha,\beta}^{\gamma}>0$ we have $\stab(\alpha, \beta,
 \gamma) <N_1$ if $\gamma_3>0$, and $\stab(\alpha, \beta, \gamma) <N_2$ if $\gamma_2 >0$.

Moreover, $N_1<N_2$ when
$\gamma_2+\gamma_3>1$.
 \flushright$\Box$
\end{exa}

\begin{exa}[The Kronecker coefficients: One of the
    partitions is a two-row shape] 
The case when $\gamma$ has only one row, $\gamma=(p)$,  
was studied in \cite{Ballantine:Orellana}. It
  was shown there (Theorem 5.1) that  
  \[
  \stab(\alpha,\beta,(p) ) \le  |\alpha|+\alpha_1+ 2\,p.
  \]
Notice that this bound coincides with $\stab(\alpha,(p))$ after Theorem \ref{thm:global}.
In this case,
\[
N_1=p+|\bar{\alpha} \cap \bar{\beta}|+\alpha_1+\beta_1,
\]
is less than or equal to $N_2$. 
It is also mentioned in \cite{Ballantine:Orellana} that, for the case when $\alpha=\beta$, Vallejo's
bound $N_V$ does beat this bound (that is, $ \stab(\alpha,\alpha) $),  but not always. Indeed, when $\alpha=\beta$, $N_2$ coincides with $N_V$. 
\flushright$\Box$
\end{exa}


The situation described in the previous example, where 
$\stab(\alpha, \beta)<N_V(\alpha, \beta, \gamma)$
 raises the question of
whether  $\min(N_1, N_2)$ is always less or equal to $\stab(\alpha,
\beta)$ when $\overline{g}_{\alpha,\beta}^{\gamma}>0 $. This is indeed the case since, as a direct consequence of Theorem \ref{thm:g and g1} ,$N_2 \leq |\alpha|+|\beta|+\alpha_1+\beta_1$

\begin{exa}[Vallejo's example]\label{exa:Vallejo example} In \cite{Vallejo} the case $\alpha=(3,2)$, $\beta=(2,2,1)$, $\gamma=(2,2)$ was considered. 
In this case $\stab(\alpha,\beta,\gamma)=10$, but 
\[
N_B(\alpha,\beta,\gamma)=N_V(\alpha,\beta,\gamma)=N_1(\alpha,\beta,\gamma)=11.
\]
 Nevertheless, $N_2(\alpha,\beta,\gamma)=10$.  \flushright$\Box$
\end{exa}





\section*{Acknowledgments}

We thank Ernesto Vallejo for pointing to us the reference \cite{Brion:Foulkes},  Ron King for pointing to us Littlewood's formula \eqref{eq:Littlewood}, and Richard Stanley for suggesting to look at \cite{Kirillov:saturation}. We also thank John Stembridge for making freely available his Maple package SF \cite{Stembridge:SF}. 


\bibliographystyle{alpha}


\begin{thebibliography}{BOR08b}

\bibitem[BK73]{Butler:King}
P.~H. Butler and R.~C. King.
\newblock The symmetric group: characters, products and plethysms.
\newblock {\em J. Mathematical Phys.}, 14:1176--1183, 1973.

\bibitem[BO05]{Ballantine:Orellana}
Cristina~M. Ballantine and Rosa~C. Orellana.
\newblock On the {K}ronecker product {$s\sb {(n-p,p)}\ast s\sb \lambda$}.
\newblock {\em Electronic Journal of Combinatorics}, 12:\#R28, 26 pp.
  (electronic), 2005.

\bibitem[BOR08a]{Briand:Orellana:Rosas:FPSAC}
Emmanuel Briand, Rosa Orellana, and Mercedes Rosas.
\newblock Quasipolynomial formulas for the {K}ronecker coefficients indexed by
  two two--row shapes (extended abstract).
\newblock ArXiv:0812.0861v1, 2008.

\bibitem[BOR08b]{Briand:Orellana:Rosas:SH}
Emmanuel Briand, Rosa Orellana, and Mercedes Rosas.
\newblock Reduced {K}ronecker coefficients and counter--examples to
  {M}ulmuley's conjecture {SH}.
\newblock ArXiv:0810.3163v2, 2008.

\bibitem[BORon]{Briand:Orellana:Rosas:Chamber}
Emmanuel Briand, Rosa Orellana, and Mercedes Rosas.
\newblock Quasipolynomial formulas for the {K}ronecker coefficients indexed by
  two two--row shapes.
\newblock In preparation.

\bibitem[Bri93]{Brion:Foulkes}
Michel Brion.
\newblock Stable properties of plethysm: on two conjectures of {F}oulkes.
\newblock {\em Manuscripta Mathematica}, 80:347--371, 1993.

\bibitem[CHM07]{Christandl:Harrow:Mitchison}
Matthias Christandl, Aram~W. Harrow, and Graeme Mitchison.
\newblock Nonzero {K}ronecker coefficients and what they tell us about spectra.
\newblock {\em Comm. Math. Phys.}, 270(3):575--585, 2007.

\bibitem[CM93]{Clausen:Meier}
Michael Clausen and Helga Meier.
\newblock Extreme irreduzible {K}onstituenten in {T}ensordarstellungen
  symmetrischer {G}ruppen.
\newblock {\em Bayreuth. Math. Schr.}, 45:1--17, 1993.

\bibitem[Dvi93]{Dvir}
Yoav Dvir.
\newblock On the {K}ronecker product of {$S\sb n$} characters.
\newblock {\em J. Algebra}, 154(1):125--140, 1993.

\bibitem[Ful00]{Fulton}
William Fulton.
\newblock Eigenvalues, invariant factors, highest weights, and {S}chubert
  calculus.
\newblock {\em Bull. Amer. Math. Soc. (N.S.)}, 37(3):209--249 (electronic),
  2000.

\bibitem[Kir04]{Kirillov:saturation}
Anatol~N. Kirillov.
\newblock An invitation to the generalized saturation conjecture.
\newblock {\em Publ. Res. Inst. Math. Sci.}, 40(4):1147--1239, 2004.

\bibitem[Kle77]{Klemm}
Michael Klemm.
\newblock Tensorprodukte von {C}harakteren der symmetrischen {G}ruppe.
\newblock {\em Arch. Math. (Basel)}, 28(5):455--459, 1977.

\bibitem[Kly04]{Klyachko}
Alexander Klyachko.
\newblock Quantum marginal problem and representations of the symmetric group.
\newblock ArXiv:quant-ph/0409113, september 2004.

\bibitem[Lit58]{Littlewood:1958}
D.~E. Littlewood.
\newblock Products and plethysms of characters with orthogonal, symplectic and
  symmetric groups.
\newblock {\em Canad. J. Math.}, 10:17--32, 1958.

\bibitem[Mac95]{Macdonald}
I.~G. Macdonald.
\newblock {\em Symmetric functions and {H}all polynomials}.
\newblock Oxford Mathematical Monographs. The Clarendon Press Oxford University
  Press, New York, second edition, 1995.
\newblock With contributions by A. Zelevinsky, Oxford Science Publications.

\bibitem[Man09]{Manivel:rectangularKron}
Laurent Manivel.
\newblock On rectangular {K}ronecker coefficients.
\newblock arXiv:0907.3351v1, July 2009.

\bibitem[Mul07]{GCT6}
Ketan~D. Mulmuley.
\newblock Geometric complexity theory {VI}: the flip via saturated and positive
  integer programming in representation theory and algebraic geometry.
\newblock Technical Report TR--2007--04, Computer Science Department, The
  University of Chicago, may 2007.
\newblock (GCT6). Available as {ArXiv:cs/0704.0229} and at
  \url{http://ramakrishnadas.cs.uchicago.edu}. Revised version to be available
  here.

\bibitem[Mur38]{Murnaghan:1938}
Francis~D. Murnaghan.
\newblock The {A}nalysis of the {K}ronecker {P}roduct of {I}rreducible
  {R}epresentations of the {S}ymmetric {G}roup.
\newblock {\em Amer. J. Math.}, 60(3):761--784, 1938.

\bibitem[Mur55]{Murnaghan:1955}
Francis~D. Murnaghan.
\newblock On the analysis of the {K}ronecker product of irreducible
  representations of {$S\sb n$}.
\newblock {\em Proc. Nat. Acad. Sci. U.S.A.}, 41:515--518, 1955.

\bibitem[Oed08]{Oeding}
Luke Oeding.
\newblock Report on "{G}eometry and representation theory of tensors for
  computer science, statistics and other areas.".
\newblock ArXiv:0810.3940, 2008.

\bibitem[Ros01]{Rosas:2001}
Mercedes~H. Rosas.
\newblock The {K}ronecker product of {S}chur functions indexed by two--row
  shapes or hook shapes.
\newblock {\em Journal of algebraic combinatorics}, 14(2):153--173, 2001.
\newblock Previous version at ArXiv:math/0001084.

\bibitem[Sta99]{Stanley:vol2}
Richard~P. Stanley.
\newblock {\em Enumerative combinatorics. {V}ol. 2}, volume~62 of {\em
  Cambridge Studies in Advanced Mathematics}.
\newblock Cambridge University Press, Cambridge, 1999.
\newblock With a foreword by Gian-Carlo Rota and appendix 1 by Sergey Fomin.

\bibitem[Ste95]{Stembridge:SF}
John Stembridge.
\newblock A {M}aple package for symmetric functions.
\newblock {\em Journal of Symbolic Computation}, 20:755--768, 1995.
\newblock The {M}aple package {SF} is available at
  \url{http://www.math.lsa.umich.edu/~jrs/maple.html#SF}.

\bibitem[STW93]{Scharf:Thibon:Wybourne}
Thomas Scharf, Jean-Yves Thibon, and Brian~G. Wybourne.
\newblock Reduced notation, inner plethysms and the symmetric group.
\newblock {\em J. Phys. A}, 26(24):7461--7478, 1993.

\bibitem[Thi91]{Thibon}
Jean-Yves Thibon.
\newblock Hopf algebras of symmetric functions and tensor products of symmetric
  group representations.
\newblock {\em Internat. J. Algebra Comput.}, 1(2):207--221, 1991.

\bibitem[Val99]{Vallejo}
Ernesto Vallejo.
\newblock Stability of {K}ronecker products of irreducible characters of the
  symmetric group.
\newblock {\em Electronic journal of combinatorics}, 6(1):1--7, 1999.

\bibitem[Wey12]{Weyl}
Hermann Weyl.
\newblock Das asymtotische {V}erteilungsgesetz der {E}igenwerte lineare
  partialler {D}ifferentialgleichungen.
\newblock {\em Mathematische Annalen}, 71:441--479, 1912.

\end{thebibliography}

\def\cprime{$'$}

\end{document}